\newtheorem{theorem}{Theorem}[section]
\theoremstyle{plain}
\newtheorem{corollary}[theorem]{Corollary}
\newtheorem{lemma}[theorem]{Lemma}
\newtheorem{proposition}[theorem]{Proposition}
\theoremstyle{remark}
\numberwithin{equation}{section}
\newcommand{\tr}{\operatorname{tr}}
\newcommand{\otr}{\operatorname{0-tr}}
\newcommand{\vol}{\operatorname{vol}}
\newcommand{\ovol}{\operatorname{0-vol}}
\newcommand{\inj}{\operatorname{inj}}
\newcommand{\diam}{\operatorname{diam}}
\newcommand{\re}{\operatorname{Re}}
\newcommand{\res}{\operatorname{Res}}
\newcommand{\rank}{\operatorname{rank}}
\newcommand{\bbR}{\mathbb{R}}
\newcommand{\bbH}{\mathbb{H}}
\newcommand{\bbC}{\mathbb{C}}
\newcommand{\bbZ}{\mathbb{Z}}
\newcommand{\bbN}{\mathbb{N}}
\newcommand{\calA}{\mathcal{A}}
\newcommand{\calR}{\mathcal{R}}
\newcommand{\calL}{\mathcal{L}}
\newcommand{\calM}{\mathcal{M}}
\newcommand{\calF}{\mathcal{F}}
\newcommand{\cinf}{C^\infty}
\newcommand{\del}{\partial}
\newcommand{\barX}{{\bar X}}
\newcommand{\bX}{{\partial_\infty X}}
\newcommand{\Oint}{\sideset{^{0\hskip-5pt}}{}\int}
\newcommand{\Ds}{(\Delta_g-s(n-s))}
\newcommand{\vep}{\varepsilon}
\newcommand{\tS}{\tilde{S}}
\newcommand{\hD}{\hat\Delta}
\newcommand{\hR}{\hat R}
\newcommand{\Drel}{D_{\rm rel}}
\newcommand{\Rsc}{\calR^{\rm sc}}
\newcommand{\nh}{\tfrac{n}{2}}
\newcommand{\norm}[1]{\Vert #1 \Vert}
\newcommand{\bnorm}[1]{\bigl\Vert #1 \bigr\Vert}
\begin{document}
\title[Inverse Scattering Results]{Inverse Scattering Results for Manifolds 
Hyperbolic near Infinity}
\author[Borthwick]{David Borthwick}
\address[Borthwick]{Department of Mathematics and Computer Science, Emory University,
Atlanta, Georgia 30322}
\email[Borthwick]{davidb@mathcs.emory.edu}
\author[Perry]{Peter A. Perry}
\address[Perry]{Department of Mathematics, University of Kentucky, Lexington, Kentucky 40506-0027}
\email[Perry]{perry@ms.uky.edu}
\thanks{Borthwick supported in part by NSF\ grant DMS-0901937.  
Perry supported in part by NSF grant DMS-0710477.}
\date{June 24, 2010}

\begin{abstract}
We study the inverse resonance problem for conformally compact manifolds which are
hyperbolic outside a compact set.  Our results include compactness of isoresonant metrics
in dimension two and of isophasal negatively curved metrics in dimension three.  In dimensions
four or higher we prove topological finiteness theorems under the negative curvature assumption.
\end{abstract}
\maketitle
\tableofcontents

\section{Introduction}\label{intro.sec}
The inverse problem of recovering an asymptotically hyperbolic metric from 
the associated scattering data has many possible variants, depending on how 
much knowledge is assumed.    It is well-known
that the resonance set does not determine an asymptotically hyperbolic manifold
completely, even in the exactly hyperbolic case.  See, for example,
Guillop\'e-Zworski \cite[Remark 2.15]{GZ:1997},
Brooks--Gornet--Perry \cite{BGP:2000}, Brooks-Davidovitch \cite{BD:2003}, and the survey paper 
Gordon-Perry-Schueth \cite{GPS:2005}.
One can however obtain strong positive results by assuming knowledge of the
scattering matrix itself.  For surfaces, a result of Lassas-Uhlmann \cite{LU:2001}
shows that the scattering matrix at the point $s=1$ determines the metric up to isometry.
The corresponding result for even dimensional conformally compact Einstein manifolds
was proven by Guillarmou-S\'a~Barreto \cite{GSB:2009}.
Another recent inverse result of S\'a~Barreto \cite{SB:2005} shows that an 
asymptotically hyperbolic manifold is completely determined by scattering matrix
at all energies.   Note that one must fix the boundary at infinity to make sense of the
assumption that two scattering matrices are equal.

In between these two extremes, another standard assumption in scattering theory
is that the metrics are \emph{isophasal}, meaning that they share the 
same \emph{scattering phase}.  Defining the scattering phase requires some 
regularization of scattering determinants.  In the even dimensional asymptotically
hyperbolic case, Guillarmou \cite{Gui:2005b} shows that a canonical regularization
can be defined.
In the odd dimensional case, we can only define a relative scattering phase between two
manifolds that are isometric near infinity.  
For all of the isoscattering examples cited above, the resonance sets can be identified 
because the respective scattering matrices are intertwined by transplantation operators.  
For hyperbolic surfaces the transplantation method gives examples that are
isophasal, as noted in \cite[Remark 2.15]{GZ:1997}.   The three-dimensional
isoscattering pairs are not necessarily isometric near infinity, so it's not even clear that 
the relative scattering phase is well-defined in these cases.

For conformally compact manifolds which are hyperbolic near infinity
(i.e. outside a compact set), the Hadamard factorization 
of the relative scattering determinant from Borthwick 
\cite[Prop.\ 7.2]{Borthwick:2008} shows that the resonance set determines the scattering
phase (relative to some fixed background metric) up to a polynomial of degree $n+1$.  
Thus, assuming a background metric is fixed, the isophasal condition is only slightly stronger than 
isoresonance, in the sense that it requires the equality of only a few additional parameters.  

The purpose of this note is to prove topological finiteness and geometric compactness results
in the context of conformally compact manifolds hyperbolic near infinity, for
isoresonant classes in even dimensions and isophasal classes in odd dimensions.

For $(X, g)$ conformally compact and hyperbolic near infinity, we let $\dim X = n+1$
and denote by $\Delta_g$ the positive Laplacian associated to $g$.   The 
resolvent $R_g(s) := \Ds^{-1}$ has a meromorphic continuation to $s\in\bbC$
with poles of finite rank \cite{MM:1987, GZ:1995b}.   
The \textit{resonance set} $\calR_g$ is the set of poles of $R_g(s)$, counted according
to the multiplicity given by   
$$
m_g(\zeta) := \rank \res_\zeta R_g(s).
$$

Resonances are closely related to the poles of the scattering matrix $S_g(s)$, 
defined as in \cite{JS:2000, GZ:2003}.  Let $\rho$ be a boundary defining function
for the conformal compactification $\barX$.
For $\re s = \nh$, $s\ne \nh$, a function $f_1\in \cinf(\bX)$
determines a unique solution of $\Ds u = 0$ such that
$$
u \sim \rho^{n-s} f_1 + \rho^s f_2
$$
as $\rho \to 0$, with $f_2\in \cinf(\bX)$
This defines the map $S_g(s): f_1 \mapsto f_2$, which extends meromorphically to $s\in \bbC$ as
a family of pseudodifferential operators of order $2s-n$.
To define a scattering determinant, we will fix a background metric $g_0$ and
use $S_{g_0}$ as a reference operator. 
If metrics $g, g_0$ agree to $O(\rho^\infty)$, then the product $S_g(s) S_{g_0}(s) - I$ is 
smoothing \cite{JS:2000} and so the relative scattering determinant,
\begin{equation}\label{tau.def}
\tau(s) := \det S_g(s) S_{g_0}(s)^{-1},
\end{equation}
is well-defined as a Fredholm determinant.  When restricted to the critical line $\re s = \nh$,
we have $|\tau(s)| = 1$, and the relative
scattering phase is a real-valued function (for real $\xi$) defined by
\begin{equation}\label{sigma.def}
\sigma(\xi) := \frac{i}{2\pi} \log \tau(\nh + i\xi),
\end{equation}
with branches chosen so that $\sigma(\xi)$ is continuous starting from $\sigma(0) = 0$.
By the symmetry properties of the scattering matrix, $\sigma(-\xi) = - \sigma(\xi)$. 
The scattering matrices depend on the choice of $\rho$, but $\tau(s)$ and 
$\sigma(\xi)$ are invariantly defined.

To state our results, fix a conformally compact manifold $(X_0, g_0)$ of dimension $n+1$ with 
a compact subset $K_0 \subset X_0$ such that $g_0$ is hyperbolic outside $K_0$ 
(meaning sectional curvatures $=-1$).   We wish to allow arbitrary metric perturbations 
within $K_0$, and so consider the class 
\begin{equation}\label{calM.def}
\calM(X_0, g_0, K_0) := \Bigl\{ (X,g) :\; (X-K, g) \cong (X_0 - K_0, g_0) \text{ for some }K\subset X\Bigr\},
\end{equation}
where $\cong$ denotes Riemannian isometry.
For each $X_0, g_0$ we will fix a boundary defining function $\rho$ and then use this 
same function for the entire class $\calM(X_0, g_0, K_0)$.

Naturally, the strongest results are possible in the case of surfaces:
\begin{theorem}\label{2d.cpt.thm}
Fix $X_0, g_0, K_0$ as above with $\dim X_0 = 2$.  If $\calA \subset \calM(X_0, g_0, K_0)$ 
is a collection of surfaces $(X,g)$ that share a common resonance set $\calR$, then $\calA$ 
is compact in the $\cinf$ topology.
\end{theorem}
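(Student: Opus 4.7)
The plan is to use the isoresonant hypothesis to extract a chain of heat invariants that control each metric in $\calA$ up to diffeomorphism, then invoke an Osgood--Phillips--Sarnak style compactness argument adapted to the hyperbolic-near-infinity setting.

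First, I would combine the Hadamard factorization of the relative scattering determinant from \cite[Prop.~7.2]{Borthwick:2008} with the Birman--Krein formula to deduce that the resonance set $\calR$ determines the relative $0$-regularized heat trace $\otr(e^{-t\Delta_g}-e^{-t\Delta_{g_0}})$ up to a polynomial whose degree depends only on $n+1=2$. The small-$t$ expansion of this relative trace has coefficients given by local curvature integrals supported in $K_0$ (since $g$ and $g_0$ already coincide outside). Gauss--Bonnet applied to the compact piece identifies the leading coefficient with a multiple of $\chi(X)-\chi(X_0)$, forcing $\chi(X)$ to be constant on $\calA$; since the funnel ends are already fixed, this determines $X$ up to diffeomorphism, so one may regard $\calA$ as a family of metrics on the single smooth manifold $X_0$ that agree with $g_0$ off $K_0$. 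The next coefficient yields a uniform bound on $\int_{K_0}(K_g+1)^2\, dA_g$, and in general the $k$-th coefficient bounds a linear combination of $L^2$-norms of covariant derivatives of $K_g$ up to order $2k-2$.

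Second, I would pass to the conformal gauge. Classical $2$D uniformization (isothermal coordinates, together with a gauge argument that matches the metric at $\partial K_0$) gives, for each $g\in\calA$, a diffeomorphism of $X_0$ fixing the ends under which $g$ becomes $e^{2\varphi}g_0$ with $\varphi$ compactly supported in a neighborhood of $K_0$. The curvature identity $K_g=e^{-2\varphi}(K_{g_0}-\Delta_{g_0}\varphi)$ combined with the $L^2$-bound on $K_g+1$ is an elliptic equation for $\varphi$; to close it one needs lower-order control on $\varphi$ itself. I would extract this from the relative determinant $\det(\Delta_g R_{g_0}(\nh))$, which by the same Hadamard factorization of $\tau(s)$ is determined by $\calR$ up to finitely many parameters. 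A Polyakov-type conformal anomaly formula in the $0$-regularized setting, combined with a Moser--Trudinger inequality adapted to compactly supported perturbations, should then yield a uniform $H^1$ bound on $\varphi$. An elliptic bootstrap using the higher-order curvature bounds from subsequent heat coefficients upgrades this to bounds in every $H^k$, after which Rellich and a diagonal extraction produce a subsequence converging in $\cinf$.

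The principal obstacle I anticipate is the second step: establishing the Polyakov-type anomaly formula and the Moser--Trudinger estimate in the $0$-regularized setting, for a conformally compact surface that is merely hyperbolic near infinity rather than globally hyperbolic. The anomaly formula must correctly account for the renormalization of the volume and of the trace, and the Moser--Trudinger inequality must be applied to functions supported in $K_0$ with constants uniform in the isoresonant family. Once the $H^1$-control of $\varphi$ is in place, the bootstrap and extraction are routine; but carrying out this analytic input rigorously for the relative determinant and the funnel ends is where the substantive work concentrates.
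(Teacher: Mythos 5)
There is a genuine gap, and it sits at the heart of your second step. You assert that each $g\in\calA$ can be written, after a diffeomorphism fixing the ends, as $e^{2\varphi}g_0$ with $\varphi$ compactly supported. This is false in general: a metric perturbation inside $K_0$ moves the global conformal structure of the surface (its point in moduli space), so $g$ need not be conformally equivalent to $g_0$ by any diffeomorphism, let alone with a compactly supported conformal factor. In fact, the restriction to conformal deformations of a fixed background with compactly supported conformal parameter is precisely the hypothesis of the earlier Borthwick--Judge--Perry result that this theorem is designed to remove; assuming it begs the question. The correct gauge, used in the paper, is $g=e^{2\varphi}h$ where $h$ is the complete hyperbolic metric produced by Mazzeo--Taylor uniformization, and $h$ \emph{varies} with $g$ (with $\varphi\in\rho^2\cinf(\barX)$, not compactly supported). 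Once $h$ is not fixed, your Polyakov/Moser--Trudinger step loses its footing: all the analytic constants (spectral gap of $\Delta_h$, heat-kernel and resolvent bounds, Sobolev/Trudinger constants) must be shown to be uniform over the family, and this is where the substantive work lies. The paper achieves this by showing the resonance set fixes the combination $d_0=\log\Drel(1)Z_h(1)$ (note that $\log\Drel(1)$ alone is \emph{not} a resonance invariant here, unlike the compact case), using the convergence of the Selberg zeta product to bound $\inj(X,h)$ from below, the Dodziuk--Pignataro--Randol--Sullivan estimate to bound $\inf\sigma(\Delta_h)$, and Davies--Mandouvalos plus Grigor$'$yan/Bakry et al.\ to get uniform heat-kernel, resolvent, and Trudinger estimates depending only on spectral invariants; only then do the relative heat invariants (taken relative to $h$, via Proposition~\ref{res.det}) feed a bootstrap giving $\sup|\varphi|$, $C^k$ bounds on $K(g)$, and a lower bound on $\inj(X,g)$.

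A secondary but also nontrivial omission: even granting uniform curvature and injectivity-radius bounds, your final extraction step is not just ``Rellich plus a diagonal argument,'' because the metrics are not a priori conformal factors on one fixed background. The paper must double the compact cores along a fixed neck, apply the Kasue/Croke $\cinf$ compactness theorem to the closed doubles (volume of $K$ being fixed by the resonance set via the relative Poisson formula), and then run a separate argument to normalize the limiting diffeomorphisms so that they converge to the identity on the neck before restricting back to $K$. Your outline of the first step (resonances determine relative heat data, $\chi(X)$, and hence the diffeomorphism type of $X$) is broadly consistent with the paper, but without replacing the fixed-gauge reduction by uniform estimates for the varying uniformizing metric, the proof does not go through.
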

This of course is analogous to the well-known result of Osgood-Phillips-Sarnak \cite{OPS:1988}
for compact surfaces.  And it is a considerable improvement over the comparable result of
Borthwick-Judge-Perry \cite[Thm~1.4]{BJP:2003}, for which the metric perturbations
were restricted to conformal deformations with compactly supported conformal parameter.
(See \S\ref{2d.sec} for some explanation of the improvement.)

In three dimensions we require more restrictive geometric assumptions and
more scattering data to produce a comparable result:
\begin{theorem}\label{3d.cpt.thm}
Fix $(X_0, g_0)$ and $K_0\subset X_0$ as above with $\dim X_0 = 3$.  Assume that 
$\calA \subset \calM(X_0, g_0, K_0)$ is a set of 3-manifolds $(X,g)$ with negative
sectional curvatures which share a common scattering phase.
Then $\calA$ is compact in the $\cinf$ topology.
\end{theorem}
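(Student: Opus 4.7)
\smallskip
\noindent\textbf{Proof proposal.} My plan is to convert the isophasal hypothesis into uniform control on local curvature integrals over the perturbation region, apply a Cheeger--Anderson-type convergence theorem, and then bootstrap to $\cinf$ using the preserved scattering data.

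The starting point is the relationship between the relative scattering phase and the $0$-regularized relative heat trace
$$
h(t) := \otr\bigl(e^{-t\Delta_g} - e^{-t\Delta_{g_0}}\bigr).
$$
Using the Hadamard factorization of $\tau(s)$ in \cite[Prop.\ 7.2]{Borthwick:2008}, fixing $\sigma(\xi)$ simultaneously pins down the resonance set of $g$ and the polynomial parameters of degree $n+1=3$ appearing in the product expansion. A Birman--Krein identity adapted to this setting then forces $h(t)$ to be identical for every $(X,g)\in\calA$. Its small-$t$ asymptotic expansion produces heat coefficients that are integrals of universal polynomials in the curvature, and because $(X-K,g)\cong(X_0-K_0,g_0)$ the differences reduce to integrals over $K$. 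In dimension three the Weyl tensor vanishes, so the first few heat coefficients jointly control $\vol(K,g)$, $\int_K R_g\,dV_g$, and a positive-definite quadratic combination including $\int_K |R_g|^2\,dV_g$.

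With such an $L^2$ curvature bound in hand I would next secure non-collapsing. The isometry outside $K$ gives uniform control on a collar of $\partial K$, and in particular bounds the diameter and volume of $K$. The hypothesis of strictly negative sectional curvature excludes short closed geodesics by a standard Margulis argument applied to $(X,g)$, yielding a uniform positive lower bound on the injectivity radius on $K$. Combined with the $L^2$ curvature bound, Anderson's $C^{1,\alpha}$ convergence theorem in harmonic coordinates produces a subsequence converging to a limit $(X_\infty,g_\infty)\in\calM(X_0,g_0,K_0)$. The limit inherits the common scattering phase, hence the same resonance set and heat invariants; then elliptic regularity for the Bianchi-gauged Ricci system in harmonic coordinates, together with the constancy of all higher heat coefficients along $\calA$, bootstraps the convergence to $\cinf$.

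The main obstacle is the non-collapsing step. In dimension two the companion compactness result Theorem \ref{2d.cpt.thm} can lean on Gauss--Bonnet for topological control of the collapse, but no such shortcut is available in dimension three, which is why the hypothesis of strictly negative sectional curvature and the stronger isophasal (rather than merely isoresonant) assumption both enter. Making the Margulis-type exclusion of short geodesics effective inside $K$ using only an $L^2$---rather than $L^\infty$---curvature bound, and simultaneously controlling the topology of $K$ to rule out degeneration to a lower-complexity limit, is where I expect the bulk of the technical work to lie.
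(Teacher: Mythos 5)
Your outline goes wrong at the non-collapsing step, and the error is not merely technical. The Margulis lemma does \emph{not} exclude short closed geodesics on a negatively curved manifold: it only describes the structure of the thin part (tubes about short geodesics), and there are negatively curved, even hyperbolic, $3$-manifolds with arbitrarily short closed geodesics. So ``a standard Margulis argument'' cannot produce a uniform lower bound on $\inj(X,g)$ across $\calA$, no matter how you upgrade the $L^2$ curvature control. In the paper the injectivity radius bound comes from the scattering data itself: the common scattering phase determines the relative wave trace (Proposition~\ref{scphase.wavetr}), and under the strict negative curvature hypothesis Rowlett's singularity expansion (\ref{wave.sing}) shows that the wave trace singularities for $t>0$ determine the length spectrum; since in negative curvature $\inj(X,g)$ equals half the length of the shortest closed geodesic, Corollary~\ref{rel.wtrace.cor} fixes $\inj(X,g)$ as a single positive constant over $\calA$. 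This is precisely the role of the negative curvature hypothesis (it also removes conjugate points), not the exclusion of short geodesics, and it is the ingredient your proposal is missing.

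Beyond that, your route differs from the paper's in ways worth noting. You work directly on $K$ with collar control and an $L^2$ (Anderson/Petersen--Wei type) convergence theorem, whereas the paper doubles $(K,g)$ across a fixed neck to obtain closed manifolds, controls their volume and injectivity radius as in Corollary~\ref{gpw.cor}, bounds Sobolev constants via Croke's isoperimetric inequalities (Theorem~\ref{sob.thm}), and then runs the Brooks--Perry--Petersen bootstrap using \emph{all} relative heat invariants (fixed by Proposition~\ref{phase.heat}) to get $C^k$ Ricci bounds, feeding into the Kasue--Croke compactness theorem (Theorem~\ref{kasue.thm}). Your final $\cinf$ bootstrap from higher heat coefficients is in the same spirit as this last stage, but as written it presupposes Sobolev inequalities with uniform constants and a uniform injectivity radius bound, both of which your argument has not yet secured; once the injectivity radius is obtained from the length spectrum as above, the doubling construction is what makes the compact-manifold machinery (Croke, Kasue, BPP) applicable essentially verbatim.
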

Note that the isophasal condition could be expressed without reference to the 
scattering matrix of $(X_0, g_0)$ by requiring that the relative scattering phase 
between any pair of manifolds in $\calA$ is zero.  In practice it will be more 
convenient to define relative phases $\sigma_g(\xi)$ with respect to the fixed
background $g_0$.

Theorem \ref{3d.cpt.thm} is closely analogous to compactness results obtained
for isospectral compact 3-manifolds by Anderson \cite{Anderson:1991} and 
Brooks-Perry-Petersen \cite{BPP:1992}.
In dimensions greater than three, the conclusions are limited to topological finiteness,
just as in the corresponding results of \cite{BPP:1992}.
\begin{theorem}\label{finite.thm}
Fix $X_0, g_0, K_0$ as above with $\dim X_0 = n+1 \ge 4$.  Assume that 
$\calA \subset \calM(X_0, g_0, K_0)$ is a set of $(n+1)$-manifolds $(X,g)$ with negative
sectional curvatures which share either 
\begin{itemize}
\item a common resonance set $\calR$ if $\dim X$ is even, or
\item a common scattering phase $\sigma(\xi)$ if $\dim X$ is odd.
\end{itemize}
Then $\calA$ contains only finitely many homeomorphism types, and for $\dim X>4$
at most finitely many diffeomorphism types.
\end{theorem}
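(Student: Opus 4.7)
The plan is to adapt the strategy of Brooks--Perry--Petersen \cite{BPP:1992} for compact isospectral manifolds in dimensions $\ge 4$ to the relative scattering setting here: translate the shared scattering data into uniform bounds on curvature integrals over the compact region where $g$ differs from $g_0$, then feed these into an Anderson--Cheeger-type finiteness theorem.

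My first step would be to show that the hypotheses on $\calA$ force agreement, up to a finite-dimensional ambiguity, of the relative heat traces
$$
h_g(t) := \otr\bigl(e^{-t\Delta_g} - e^{-t\Delta_{g_0}}\bigr).
$$
For odd $\dim X$, a Birman--Krein type formula expresses $h_g$ as a Gaussian integral transform of $\sigma_g'$ plus a finite sum over resonances (themselves determined by $\sigma_g$), so the isophasal hypothesis makes $h_g$ constant on $\calA$. For even $\dim X$, the Hadamard factorization of \cite[Prop.~7.2]{Borthwick:2008} realizes the relative scattering determinant as a Weierstrass product over $\calR_g$ times $\exp P(s)$ with $P$ of degree $\le n+1$; within an isoresonant family this forces the coefficients of the short-time expansion
$$
h_g(t) \sim \sum_{k\ge 0} a_k(g)\, t^{(k-n-1)/2},\qquad t\downarrow 0,
$$
with $k\ge n+2$ to be constant on $\calA$, the first $n+2$ being determined only up to the polynomial freedom.

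Each $a_k(g)$ is the integral over $X$ of a universal polynomial in the curvature tensor of $g$ and its covariant derivatives; since $g=g_0$ outside the compact support $K_g$ of $g - g_0$, the coefficient reduces to such an integral over $K_g$ plus a constant depending only on $g_0$. For sufficiently large $k$ one extracts, modulo lower-order curvature terms, a bound of the form $\int_{K_g}|\mathrm{Rm}_g|^2\,d\vol_g \le C$. To pin down $\vol_g(K_g)$ itself---whose contribution lies in $a_0$ and so in the ambiguous polynomial range in the even case---I would appeal in parallel to the resonance counting asymptotic, whose leading term is proportional to $\vol_g(K_g)$ and is therefore directly determined by $\calR$. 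The negative-curvature hypothesis supplies the sign information needed to convert the integral combinations into honest $L^2$ bounds on the full curvature tensor over $K_g$.

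With volume and $L^{n/2}$-curvature bounds in hand, Anderson's convergence theorem yields finitely many homeomorphism types, and its smooth refinement yields finitely many diffeomorphism types in dimensions $>4$; the exclusion of $\dim X=4$ reflects the familiar failure of smooth convergence under $L^{n/2}$ curvature bounds in four dimensions. The principal obstacle is obtaining a diameter bound, since the core $K_g$ is not canonical---only the isometry type of $X - K_g$ is constrained. I would instead work with the canonical compact region
$$
K_g^* := \overline{\bigl\{x\in X : g\text{ is not hyperbolic at }x\bigr\}},
$$
which is contained in every admissible $K_g$ and inherits the volume and curvature bounds. A point $p\in K_g^*$ at large distance $r$ from $\partial K_g^*$, combined with the injectivity-radius lower bound supplied by negative curvature and the hyperbolic collar structure at infinity, would force $B(p,r)\subset K_g^*$ to have volume at least that of a Euclidean $r$-ball by a G\"unther comparison argument; the volume bound therefore caps $r$ and yields the diameter estimate. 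Making this Bishop--Gromov/G\"unther argument precise in the conformally compact setting, and carefully tracking the distinction between $K_g$ and $K_g^*$ when invoking the finiteness theorem, is where the bulk of the technical work will lie.
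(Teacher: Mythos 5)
Your proposal takes a genuinely different route from the paper, and two of its load-bearing steps have real gaps. First, the plan to extract $\int_{K}|\operatorname{Rm}_g|^2\,dg$ from the relative heat invariants in dimension $\ge 4$ does not go through: the relevant coefficient is a fixed universal combination of the schematic form $c\int\bigl(2|\operatorname{Rm}|^2-2|\operatorname{Ric}|^2+5R^2\bigr)$, and the negative $|\operatorname{Ric}|^2$ term can swallow the $|\operatorname{Rm}|^2$ term; negative sectional curvature supplies no inequality that rescues this, and in dimensions $\ge 4$ there is no 3-dimensional identity expressing $\operatorname{Rm}$ through $\operatorname{Ric}$. This is precisely the obstruction the paper concedes at the outset of its finiteness section, which is why it abandons curvature control entirely and instead invokes the Grove--Petersen--Wu controlled-topology theorem, needing only a volume upper bound and an injectivity radius lower bound for a \emph{closed} manifold (obtained by doubling the compact core across a fixed neck). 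Second, you never actually establish the injectivity radius lower bound: negative curvature gives $\inj(X,g)=\tfrac12\ell_0(g)$ but does not bound $\ell_0(g)$ from below, and ``hyperbolic collar structure at infinity'' says nothing about short geodesics in the core. In the paper this bound is scattering-theoretic: the relative Poisson formula plus Rowlett's wave-trace singularity expansion show that $\calR_g$ (or, in odd dimensions, $\sigma$, which determines $\calR_g$ modulo the fixed background) determines the length spectrum, hence $\inj(X,g)$.

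Two further points. Your even-dimensional volume argument appeals to a resonance-counting asymptotic with leading term proportional to the core volume; no such Weyl-type asymptotic is known for manifolds hyperbolic near infinity (only upper and lower bounds of the same order), whereas the paper gets $\vol(K,g)$ directly from the $t\to0$ expansion of the relative wave $0$-trace, which in even dimensions is completely determined by $\calR$; in odd dimensions it is the scattering phase, via the relative heat expansion, that fixes the zeroth relative heat invariant and hence $\vol(K,g)$. Finally, the Anderson-type finiteness theorems you cite apply to closed manifolds and additionally need diameter and injectivity-radius (or Ricci) control that your argument does not supply; the paper sidesteps both issues with the doubling construction, and its exclusion of diffeomorphism finiteness in dimension $4$ comes from the hypotheses of the Grove--Petersen--Wu theorem ($m\ge 5$), not from any $L^{n/2}$-curvature convergence phenomenon. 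Your G\"unther-type control of the depth of a canonical core is a reasonable idea, but as it stands it rests on the missing injectivity radius and volume inputs.
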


The paper is organized as follows.  In \S\ref{poisson.sec}--\ref{heat.sec} we review
the scattering theory and the various results that allow one to deduce geometric information 
from it.  The proof of Theorem~\ref{finite.thm} is given in \S\ref{finite.sec}.
In \S\ref{cpt.sec} we review some geometric compactness results and apply these to give 
the proofs of Theorems~\ref{2d.cpt.thm} and \ref{3d.cpt.thm}.  The proof for surfaces
is the most complicated, in that we must establish curvature bounds without any
control of the injectivity radius at the outset.  This part of the proof, which is based on
conformal uniformization, is deferred to \S\ref{2d.sec}.

\vskip12pt\noindent
\textbf{Acknowledgment.}   The authors are grateful for support from the
Mathematical Sciences Research Institute, where a portion of this work was carried out.
We also thank Pierre Albin for various helpful comments and corrections.

\section{Poisson formula}\label{poisson.sec}

Resonances are closely related to the poles of the scattering matrix $S_g(s)$,
defined as in \cite{JS:2000, GZ:2003}.  This operator has infinite-rank poles, so
to define multiplicities of scattering poles, we use a renormalized scattering 
matrix of order zero given by 
\begin{equation}\label{tS.def}
\tS_g(s) := \frac{\Gamma(s-\nh)}{\Gamma(\nh - s)}
\Lambda^{n/2-s} S_g(s) \Lambda^{n/2-s}.
\end{equation}
where
$$
\Lambda := \frac12 (\Delta_h+1)^{1/2}.
$$
This renormalization makes $\tS_g(s)$ into a meromorphic family of Fredholm operators with 
poles of finite rank.  The multiplicity at a pole or zero of $S_g(s)$ is then defined by
$$
\nu_g(\zeta) := - \tr \bigl[\res_\zeta \tS_g'(s) \tS_g(s)^{-1}\bigr]
$$
(with poles counted positively to match the resonances).
The dependence of $\tS_g(s)$ on the boundary defining function $\rho$ is wiped out by
the trace, so that $\nu_g(\zeta)$ is invariantly defined.

The scattering multiplicities are related to the resonance multiplicities by results of
Guillop\'e-Zworksi \cite{GZ:1997}, Borthwick-Perry \cite{BP:2002} and 
Guillarmou \cite{Gui:2005} (with a restriction that was later removed in \cite{GN:2006}):
\begin{equation}\label{nu.mumu}
\nu_g(\zeta) = m_g(\zeta) - m_g(n-\zeta) + \sum_{k\in \bbN} \Bigl( \mathbbm{1}_{n/2 - k}(\zeta) 
-\mathbbm{1}_{n/2+k}(\zeta) \Bigr) d_k,
\end{equation}
where $\mathbbm{1}_p$ denotes the characteristic function on $\{p\}$ and
$$
d_k := \dim \ker \tS_{g}(\tfrac{n}2+k).
$$
From Graham-Zworski \cite{GZ:2003} it follows that the $d_k$'s are invariants of the 
conformal structure induced on $\bX$ by the metric $\rho^2 g$. 
For surfaces ($n=1$), the $d_k$ terms always vanish \cite[Lemma 8.6]{Borthwick}.  
But in higher dimensions they may occur and even saturate the resonance counting
function (see \cite{GN:2006} or \cite{Borthwick:2008}).  

To state certain results, such as the Poisson formula, we need to incorporate these
extra scattering poles into a \emph{scattering resonance} set,
$$
\Rsc_g := \calR_g \cup \bigcup_{k=1}^\infty 
\Bigr\{\tfrac{n}2 - k \text{ with multiplicity } d_k\Bigr\}.
$$
For any inverse scattering problem, it makes sense to assume that the $d_k$'s
are fixed, since they depend only on the structure at infinity. 

We will state inverse scattering results in two different contexts.  
First, we showing that certain geometric information that can be deduced solely from $\Rsc_g$, 
without assuming knowledge of $(X_0,g_0)$.  The catch is that for this purpose 
we must assume that $(X_0, g_0)$ is exactly hyperbolic.  Later in the section, we'll give inverse
results that apply within $\calM(X_0, g_0, K_0)$.  This is the context of \S\ref{intro.sec}, 
for which we assume knowledge of $(X_0,g_0)$ and $\calR_{g_0}$, 
but drop the assumption that the background is exactly hyperbolic.

In the case of a compactly supported perturbation of a conformally compact
hyperbolic metric, Borthwick \cite{Borthwick:2008} gave a Poisson formula for resonances
that relates the regularized wave trace, defined as a distribution on $\bbR$ by
$$
\Theta_g(t) := \otr \left[ \cos \left(t \sqrt{\smash[b]{\Delta_g - n^2/4}}\,\right) \right],
$$
to a sum over $\Rsc_g$.  The assumption the the background is exactly hyperbolic
allows contributions from the background metric to be cancelled from both sides of
a relative Poisson formula, yielding a result that has no explicit dependence on 
$(X_0, g_0)$ or $\Rsc_{g_0}$.

\begin{theorem}[Poisson formula]\label{poisson.thm}
Let $(X,g)$ be a compactly supported perturbation of a conformally
compact hyperbolic manifold.  Then, in a distributional sense on $\bbR - \{0\}$, 
$$
\Theta_g(t)
= \frac12 \sum_{\zeta\in \Rsc_g} e^{(\zeta-n/2)|t|} - A(X) \frac{\cosh t/2}{(2\sinh |t|/2)^{n+1}},
$$
where
$$
A(X) := \begin{cases}0 & n \text{ odd }(\dim X\>\text{is even}),  \\
\chi(X) & n \text{ even }(\dim X\>\text{is odd}).  \end{cases}
$$
\end{theorem}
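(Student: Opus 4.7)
The plan is to reduce the proof to two ingredients: a relative Poisson formula between $g$ and an exactly hyperbolic reference metric, and a direct Poisson formula for that reference metric, after which the background-dependent terms cancel. By hypothesis we may fix a conformally compact hyperbolic $(X_0,g_0)$ of which $(X,g)$ is a compactly supported perturbation, and we may arrange that $K \subset X_0$ is the common support of the perturbation.

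Since $g$ and $g_0$ agree outside a compact set, the difference of cosine wave operators is smoothing off the diagonal and of trace class after suitable localization, so the \emph{relative} wave trace $\Theta_g(t) - \Theta_{g_0}(t)$ is a well-defined tempered distribution on $\bbR - \{0\}$ requiring no $0$-regularization. Its Fourier--Laplace transform is controlled, via a Birman--Krein-type identity, by the logarithmic derivative of the relative scattering determinant $\tau(s)$ from \eqref{tau.def}. The Hadamard factorization of $\tau(s)$ established in \cite[Prop.~7.2]{Borthwick:2008} exhibits its zeros and poles as $\Rsc_g$ versus $\Rsc_{g_0}$ with their natural multiplicities, together with polynomial factors that enforce convergence. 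Shifting the contour past the critical line $\re s = \nh$ and summing residues produces the relative Poisson formula
\begin{equation*}
\Theta_g(t) - \Theta_{g_0}(t) = \tfrac12 \sum_{\zeta \in \Rsc_g} e^{(\zeta - n/2)|t|} - \tfrac12 \sum_{\zeta \in \Rsc_{g_0}} e^{(\zeta - n/2)|t|}
\end{equation*}
on $\bbR - \{0\}$.

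Next I would invoke the absolute Poisson formula for the exactly hyperbolic background, which is the model case treated (in various dimensions) by Guillop\'e--Zworski, Perry, Bunke--Olbrich, and Borthwick using the explicit hyperbolic resolvent kernel and the infinite-volume Selberg trace formula:
\begin{equation*}
\Theta_{g_0}(t) = \tfrac12 \sum_{\zeta \in \Rsc_{g_0}} e^{(\zeta - n/2)|t|} - A(X_0)\, \frac{\cosh(t/2)}{(2\sinh|t|/2)^{n+1}}.
\end{equation*}
Here the identity-element contribution equals $\ovol(X_0)$ times a universal hyperbolic profile; in odd $\dim X$ one recognizes this profile as $\cosh(t/2)/(2\sinh|t|/2)^{n+1}$, with coefficient identified as $\chi(X_0)$ via the Gauss--Bonnet formula for the renormalized volume, while in even $\dim X$ the analogous term vanishes because only odd-parity heat invariants (which vanish on a hyperbolic background) would contribute. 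Adding the two displays and using the diffeomorphism invariance $\chi(X)=\chi(X_0)$ produces the claimed identity.

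The main obstacle is rigorously justifying the contour deformation in the relative step. One needs polynomial upper bounds on $|\tau(s)|$ and on $\tau'(s)/\tau(s)$ across vertical strips, together with minimum-modulus lower bounds on circles avoiding the zero set, so that the residue sum converges as a distribution on $\bbR-\{0\}$. The upper bounds follow from Schatten-norm estimates for $S_g(s)S_{g_0}(s)^{-1} - I$ (which is smoothing under our hypotheses) combined with the Hadamard factorization; the lower bounds are standard in the resonance-counting literature. A secondary technical matter is checking that the $0$-trace regularizations of $\Theta_g$ and $\Theta_{g_0}$ match at $t\ne 0$ so that their difference is an ordinary trace, but this holds because $0$-regularization is determined solely by the asymptotic geometry near $\bX$, which is identical for $g$ and $g_0$.
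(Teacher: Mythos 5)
Your two-step plan --- a relative Poisson formula between $g$ and the exactly hyperbolic background, justified via a Birman--Krein identity and the Hadamard factorization of $\tau(s)$, followed by cancellation against the Poisson formula for the hyperbolic background itself --- is exactly the route behind Theorem~\ref{poisson.thm}: the theorem is not reproved in this paper but quoted from \cite[Thm.~1.2]{Borthwick:2008}, and \S\ref{poisson.sec} proves the analogous relative formula by the same mechanism, working with $\Upsilon_g(s)=(2s-n)\otr[R_g(s)-R_g(n-s)]$ and the factorization \cite[Prop.~7.2]{Borthwick:2008} rather than with a trace-class statement about differences of wave operators (your localization claim is glib, but the resolvent/contour route you also describe is the actual argument, and the growth and minimum-modulus bounds you list as the ``main obstacle'' are precisely the content of \cite{Borthwick:2008}). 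The reference you need for the background formula in all dimensions is Guillarmou--Naud \cite{GN:2006}, which your citation list omits; Guillop\'e--Zworski \cite{GZ:1999} covers only surfaces.

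The genuinely incorrect step is your justification of the shape of $A(X)$, which has the parities essentially reversed. In odd dimensions ($n$ even) the $0$-volume is not even invariantly defined (see the proof of Corollary~\ref{wtrace.cor}), so there is no Gauss--Bonnet identification of the coefficient with $\chi(X)$ via $\ovol$; moreover, by the strong Huygens principle the shifted wave kernel on odd-dimensional $\bbH^{n+1}$ vanishes on the diagonal for $t\neq 0$, so the identity-element contribution is zero there and cannot be the source of the $\chi(X)$ term --- that term is tied to the scattering-pole contributions at $\nh-k$, consistent with the paper's remark that $A(X)=\tfrac12\chi(\bX)$ in odd dimensions. In even dimensions the identity contribution does \emph{not} vanish: it is proportional to $\ovol(X)$, hence to $\chi(X)$ by the (even-dimensional) renormalized Gauss--Bonnet theorem, and it is absorbed by the topological part of the resonance set of the hyperbolic model; the parity of heat invariants governs the $t\to 0$ singularity and is irrelevant to the value of the trace at $t\neq 0$. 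Since you ultimately invoke the background formula as a known result, the overall scheme stands, but the explanatory gloss should be deleted in favor of the citation to \cite{GN:2006}.
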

\noindent
Note that in odd dimensions we could also write $A(X)$ as $\tfrac12 \chi(\bX)$.

In two dimensions this formula is due to Guillop\'{e} and Zworski \cite{GZ:1999}, 
and the requirement for an exactly hyperbolic background metric is not necessary
for that case.  For hyperbolic manifolds of any dimension it was proved by Guillarmou and Naud
\cite{GN:2006}.   The result as stated here is Borthwick \cite[Thm.~1.2]{Borthwick:2008}

\begin{corollary}\label{wtrace.cor}
Assume $(X,g)$ is a compactly supported perturbation of a conformally
compact hyperbolic manifold.
In the even-dimensional case ($n$ odd), the set $\Rsc_g$ determines the wave 0-trace 
as a distribution on $\bbR$, and fixes $\ovol(X,g)$ in particular.
In odd dimensions ($n$ even), $\Rsc_g$ determines $\chi(X)$ and the
restriction of the wave trace to $t\ne 0$.
\end{corollary}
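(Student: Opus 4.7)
The plan is to read off both conclusions directly from the Poisson formula of Theorem~\ref{poisson.thm} by examining how each side of that identity depends on the data $\Rsc_g$.

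For $n$ odd (even-dimensional $X$) we have $A(X)=0$, so on $\bbR \setminus \{0\}$ the Poisson formula reads
$$
\Theta_g(t) = \tfrac12 \sum_{\zeta \in \Rsc_g} e^{(\zeta - n/2)|t|}.
$$
The right-hand side, interpreted via a Schwartz cutoff (justified by the polynomial bound on the resonance counting function), is a tempered distribution on all of $\bbR$ determined by the multiset $\Rsc_g$ alone; the left side is also a tempered distribution on $\bbR$. Their difference is thus supported at $\{0\}$, hence a finite combination of derivatives of $\delta_0$. I would rule this out by comparing Fourier--Laplace transforms of the two sides, both of which agree with the renormalized resolvent trace whose polar structure is explicitly encoded by $\Rsc_g$. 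This promotes the Poisson identity to all of $\bbR$. Then $\ovol(X,g)$ is extracted by convolving $\Theta_g$ with a Gaussian to recover the heat 0-trace $\otr(e^{-t\Delta_g})$: its leading small-$t$ coefficient is a universal nonzero multiple of $\ovol(X,g)$.

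For $n$ even (odd-dimensional $X$) the Poisson identity on $\bbR \setminus \{0\}$ becomes
$$
\Theta_g(t) = \tfrac12 \sum_{\zeta \in \Rsc_g} e^{(\zeta - n/2)|t|} - \chi(X) \frac{\cosh(t/2)}{(2\sinh|t|/2)^{n+1}}.
$$
The plan splits in two steps: first extract $\chi(X)$ from $\Rsc_g$ alone, and then substitute back to pin down the wave trace on $\bbR \setminus \{0\}$. To isolate $\chi(X)$ I would pair the identity against test functions supported away from $0$ and exploit the explicit meromorphic Laplace transform of $\cosh(t/2)/(2\sinh|t|/2)^{n+1}$ (a product of shifted Gamma factors with a specific lattice of simple poles) to distinguish its contribution from that of the resonance sum, whose Laplace transform has only simple poles located at the points of $\Rsc_g$. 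Once $\chi(X)$ is in hand, the right-hand side is an explicit functional of $\Rsc_g$ and $\chi(X)$, so the wave trace is determined on $\bbR \setminus \{0\}$.

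The hard part is the odd-dimensional case: on $\bbR \setminus \{0\}$ the resonance sum and the topological term lie in the same distributional class, and separating them using only the data in $\Rsc_g$ requires a careful comparison of their Laplace-transform pole lattices. In the even-dimensional case, by contrast, the argument is essentially a Fourier-analytic promotion of equality on $\bbR \setminus \{0\}$ to equality on $\bbR$, plus a standard short-time asymptotic extraction of the 0-volume.
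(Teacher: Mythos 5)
The key ingredient of the paper's proof is absent from your argument: the Joshi--S\'a Barreto result that the wave $0$-trace has a Duistermaat--Guillemin type expansion at $t=0$, i.e.\ the Fourier transform of the cutoff trace is $\sim\sum_k a_k|\xi|^{n-2k}$ with $a_0$ a nonzero multiple of $\ovol(X,g)$. The parity of these powers is what drives both halves of the corollary, and your substitutes for it do not work as stated. In the even-dimensional case ($n$ odd) you reduce to showing that a distribution supported at $\{0\}$, hence a finite sum of derivatives of $\delta_0$, vanishes, and you propose to do this by comparing Fourier--Laplace transforms with the renormalized resolvent trace ``whose polar structure is encoded by $\Rsc_g$.'' But delta terms at $t=0$ correspond exactly to polynomial (entire) contributions on the transform side, and the polar structure of a meromorphic function does not determine its entire part --- this is precisely the ambiguity represented by the polynomial $q(s)$ in (\ref{tau.factor}). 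The paper closes this by parity: for $n$ odd all the powers $|\xi|^{n-2k}$ are odd, so no polynomial terms (hence no $\delta^{(j)}$ contributions) can occur in the expansion, and the remaining singularities $t^{-n-1+2k}$ are homogeneous and therefore visible in the behavior as $t\to 0_+$, which is already fixed by $\Rsc_g$ through the Poisson formula on $t\ne 0$.

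In the odd-dimensional case your proposed mechanism --- separating the topological term from the resonance sum by the location of the poles of their Laplace transforms --- fails for two reasons. First, the pole lattice of the Laplace transform of $\cosh(t/2)/(2\sinh|t|/2)^{n+1}$ can and does overlap with $\Rsc_g$ (for hyperbolic ends there are resonances exactly at these lattice points, with multiplicities involving topological data), so pole location alone cannot distinguish the two contributions. Second, and more fundamentally, on $t\ne 0$ the Poisson identity involves two unknowns, $\Theta_g$ and $\chi(X)$; pairing against test functions supported away from $0$ yields no equation for $\chi(X)$ unless you feed in a priori information about $\Theta_g$ itself. The paper's input is again Joshi--S\'a Barreto: in odd dimensions the $t=0$ singularities of $\Theta_g$ are derivatives of $\delta_0$, so the restriction of $\Theta_g$ to $t>0$ does not blow up as $t\to 0_+$; hence the $t^{-n-1}$ blow-up of the regularized resonance sum must be exactly cancelled by $\chi(X)\cosh(t/2)/(2\sinh(t/2))^{n+1}$, and $\chi(X)$ is read off from $\Rsc_g$, after which substitution back into the Poisson formula determines $\Theta_g$ for $t\ne 0$. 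Finally, your extraction of $\ovol(X,g)$ by Gaussian convolution is fine in spirit, but it too relies on knowing that the heat (equivalently wave) $0$-trace has the standard short-time expansion with leading coefficient proportional to $\ovol(X,g)$ --- the same Joshi--S\'a Barreto-type input that your proposal never invokes.
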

\begin{proof}
Joshi and S\'{a} Barreto \cite{JS:2001} showed that the asymptotic expansion
of the wave $0$-trace at $t=0$ has the same form as found by Duistermaat-Guillemin
\cite{DG:1975}.  That is,
if $\psi \in \cinf_0(\bbR)$ has support 
in a sufficiently small neighborhood of $0$ and $\psi = 1$ in some smaller
neighborhood of $0$, then 
\begin{equation}\label{wave.asymp}
\int_{-\infty}^\infty e^{-it\xi} \psi(t) \Theta_g(t)\>dt \sim \sum_{k=0}^\infty 
a_k |\xi|^{n-2k},
\end{equation}
where 
$$
a_0 =   \frac{2^{-n} \pi^{-\frac{n-1}2}}{\Gamma(\frac{n+1}{2})} \ovol(X,g).
$$ 

In even dimensions, the powers $|\xi|^{n-2k}$ correspond to singularities of the
form $t^{-n-1+2k}$ (homogeneous regularization).  Thus the singularities 
are detectable in the behavior of the wave $0$-trace as $t\to 0_+$.  By the Poisson formula,
$\Rsc_g$ determines the wave trace completely for $t\ne 0$, and so the wave coefficients 
$\{a_k\}$ are also fixed by $\Rsc_g$.

In the odd dimensional case ($n$ even),
$|\xi|^{n-2k}$ corresponds to $\delta^{(n-2k)}(t)$ when $n-2k \ge 0$.
Thus, the singularity of the wave 0-trace at $t=0$ is not computable from $\Rsc_g$.  
(Indeed, in odd dimensions $a_0$ depends on the choice of boundary defining function $\rho$,
so to obtain $a_0$ from $\Rsc_g$ is impossible a priori.)
Since the wave-trace singularities are localized at $t=0$,  one sees only the blowup caused
by the $\chi(X)$ term as $t \to 0_+$.  Hence $\chi(X)$ is fixed by $\Rsc_g$.
\end{proof}

Joshi and S\'{a} Barreto \cite{JS:2001} also showed that the wave $0$-trace for an asymptotically
hyperbolic manifold has singularities for $t\neq0$ contained in the set of
lengths of closed geodesics of $X$.  In the case when the sectional curvatures of $(X,g)$ 
are strictly negative, Rowlett \cite[Thm~1.1]{Rowlett:2008} has recently
refined this result to show that, for $t \ge \vep >0 $ we have
\begin{equation}\label{wave.sing}
\Theta_g(t) = \sum_{\ell \in \calL_g} \sum_{k=1}^\infty \frac{\ell}{\sqrt{|\det 1-P_\ell^k|}} \delta(t - k\ell) + R(t),
\end{equation}
where $\calL_g$ is the primitive length spectrum of $(X,g)$, $P_\ell^k$ is the $k$-times
around Poincar\'e map for the geodesic associated to $\ell$, and the remainder $R(t)$
is smooth and bounded on $[\vep,\infty)$.  This immediately leads the following:
\begin{corollary}\label{inj.cor}
Assuming that $(X, g)$ is a compactly supported perturbation of a conformally
compact hyperbolic manifold with strictly negative sectional curvatures,
the resonance set $\Rsc_g$ determines the length spectrum of $(X,g)$, and 
in particular fixes the injectivity radius $\inj(X,g)$.
\end{corollary}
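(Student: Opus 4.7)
The plan is to extract the length spectrum directly from the singular support of the wave 0-trace, then invoke a standard negative-curvature fact to recover the injectivity radius. By Corollary \ref{wtrace.cor}, the distribution $\Theta_g(t)|_{t\ne 0}$ is determined by $\Rsc_g$; in the odd-dimensional case the Euler characteristic coefficient $A(X)=\chi(X)$ appearing in Theorem \ref{poisson.thm} is itself fixed by $\Rsc_g$, so no extra data is needed. In particular, $\Rsc_g$ determines the singular support of $\Theta_g$ on $(0,\infty)$.

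Next, apply Rowlett's formula \eqref{wave.sing}, valid under the strict negative curvature hypothesis. On every interval $[\varepsilon,\infty)$, $\Theta_g$ is a smooth remainder plus a locally finite sum of Dirac masses at $t=k\ell$ for $\ell\in\calL_g$ and $k\in\bbN$. Strict negative curvature forces every closed geodesic to be hyperbolic, so each Poincar\'e map $P_\ell^k$ has no eigenvalue equal to one and the coefficient $\ell/\sqrt{|\det(1-P_\ell^k)|}$ is strictly positive. Consequently, the singular support of $\Theta_g$ on $(0,\infty)$ coincides exactly with the full length set $\{k\ell:\ell\in\calL_g,\,k\ge 1\}$, and the primitive length spectrum $\calL_g$ is recovered as its minimal generating subset under multiplication by $\bbN$.

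To deduce the injectivity radius, use that on the universal cover of $X$, a complete simply connected manifold of negative curvature, the injectivity radius at a lift of $x\in X$ equals half the minimum translation distance of a nontrivial deck transformation. Every nontrivial element of $\pi_1(X)$ is a hyperbolic isometry whose translation length, attained on its axis, is the length of the associated closed geodesic. Because $g$ is conformally compact the injectivity radius tends to infinity toward $\bX$, so the global infimum is attained in the interior and equals $\tfrac12\min\calL_g$, which is therefore determined by $\Rsc_g$.

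The main subtle point is that a primitive length $\ell'$ may coincide with an iterate $k\ell$ of a shorter primitive, in which case the observed $\delta$-coefficient at $t=\ell'$ is a sum of two positive terms rather than a single contribution; nevertheless $\ell'$ is still recoverable by subtracting the contribution predicted from already-identified shorter primitives, and strict positivity of all the coefficients makes this recursive identification unambiguous.
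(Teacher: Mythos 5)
Your argument is correct and follows essentially the same route as the paper, which treats the corollary as immediate from the Poisson formula (so that $\Rsc_g$ fixes $\Theta_g$ away from $t=0$) together with Rowlett's expansion (\ref{wave.sing}), whose strictly positive coefficients in negative curvature make the singular support equal to the set of lengths of closed geodesics, and the standard fact that $\inj(X,g)$ is half the shortest closed geodesic length. Your extra remarks on recursively separating primitive lengths from iterates and on the injectivity radius being attained in the interior are harmless elaborations of what the paper leaves implicit.
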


\noindent
(Note that under the negative curvature assumption, $\inj(X,g)$ is equal to
half the length of the shortest closed geodesic.)

\bigbreak
We now turn to the results needed for the applications given in \S\ref{intro.sec},
for which we can assume full knowledge of the fixed background $(X_0,g_0)$.  
In this situation, we can start from a relative Poisson formula,
which does not require the background to be exactly hyperbolic.  
\begin{theorem}
For $(X, g) \subset \calM(X_0, g_0, K_0)$ defined as in (\ref{calM.def}), 
where $(X_0, g_0)$ is conformally compact and hyperbolic near infinity, we have
$$
\Theta_g(t) - \Theta_{g_0}(t) = \frac12 \sum_{\zeta\in \calR_g} e^{(\zeta-n/2)|t|} 
- \frac12 \sum_{\zeta\in \calR_{g_0}} e^{(\zeta-n/2)|t|}.
$$
\end{theorem}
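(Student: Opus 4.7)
The plan is to reduce this relative Poisson formula to the absolute Poisson formula of Theorem~\ref{poisson.thm} by introducing an auxiliary exactly hyperbolic reference metric and subtracting. Theorem~\ref{poisson.thm} requires the background to be exactly hyperbolic, which $g_0$ need not be; the point of the relative version is precisely to bypass this.

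First, I would construct a conformally compact \emph{exactly} hyperbolic manifold $(X_h, g_h)$ together with a compact $K_h \subset X_h$ such that $(X_h \setminus K_h, g_h)$ is isometric to $(X_0 \setminus K_0, g_0)$. Since $g_0$ is already exactly hyperbolic on $X_0 \setminus K_0$, this amounts to filling the compact region by a compatible hyperbolic metric, i.e., realizing the hyperbolic funnel ends of $g_0$ as the ends of some complete convex cocompact hyperbolic manifold. By the definition of $\calM(X_0, g_0, K_0)$, the same isometric identification of ends then makes $(X, g)$ a compactly supported perturbation of $(X_h, g_h)$ as well.

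Next, apply Theorem~\ref{poisson.thm} to $g$ and $g_0$ separately, both viewed as compactly supported perturbations of $g_h$:
\begin{align*}
\Theta_g(t) &= \tfrac12 \sum_{\zeta\in \Rsc_g} e^{(\zeta-n/2)|t|} - A(X)\,\frac{\cosh(t/2)}{\bigl(2\sinh(|t|/2)\bigr)^{n+1}}, \\
\Theta_{g_0}(t) &= \tfrac12 \sum_{\zeta\in \Rsc_{g_0}} e^{(\zeta-n/2)|t|} - A(X_0)\,\frac{\cosh(t/2)}{\bigl(2\sinh(|t|/2)\bigr)^{n+1}}.
\end{align*}
Subtract these. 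The universal background term cancels because $A(X) = A(X_0)$: in the even-dimensional case ($n$ odd) both are zero by definition, while in the odd-dimensional case ($n$ even) $A$ equals $\tfrac12 \chi(\partial_\infty X)$ as remarked just after Theorem~\ref{poisson.thm}, and the conformal infinities of $g$ and $g_0$ are the same manifold by the isometric identification of ends. Moreover, the differences $\Rsc_g - \calR_g$ and $\Rsc_{g_0} - \calR_{g_0}$ consist of the $d_k$-corrections at $\tfrac{n}{2}-k$; by Graham--Zworski \cite{GZ:2003} the $d_k$ are conformal invariants of the boundary, hence agree for $g$ and $g_0$ and cancel in the subtraction. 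What remains is exactly the claimed identity.

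The main obstacle is Step~1: producing a complete exactly hyperbolic filling $(X_h, g_h)$ of the hyperbolic ends of $g_0$. In dimension two this is elementary via Fuchsian or Schottky uniformization. In higher dimensions it is more delicate, but since the germ of an exactly hyperbolic conformally compact metric near infinity is determined by the conformal infinity, one can realize the given funnel ends as the ends of a convex cocompact hyperbolic manifold (for example through a doubling construction across a totally geodesic collar, or by invoking existence results in the deformation space of convex cocompact structures). Once such an $(X_h, g_h)$ is available, the rest of the argument is the two cancellations described above.
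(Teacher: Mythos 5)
Your reduction hinges entirely on Step 1, and that is exactly where the argument has a genuine gap. Theorem~\ref{poisson.thm} requires the manifold to be a compactly supported perturbation of a globally defined conformally compact \emph{hyperbolic} manifold, whereas in the class $\calM(X_0,g_0,K_0)$ the background $(X_0,g_0)$ is only assumed hyperbolic near infinity; the whole point of the relative formula is to dispense with the exactly hyperbolic background, and your proof reintroduces it through an unproved assertion, namely that the hyperbolic ends of $(X_0,g_0)$ can be capped off by a complete conformally compact hyperbolic manifold $(X_h,g_h)$. This is not available in dimension $\ge 3$: the germ of an exactly hyperbolic metric near infinity is determined by a metric (not merely a conformal class) on $\bX$, and realizing a prescribed hyperbolic end germ as an end of some convex cocompact quotient is a genuine uniformization/realization problem. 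The devices you gesture at do not supply it: funnel ends in dimension $\ge 3$ contain no totally geodesic cross-section to double across, and deformation-space results prescribe conformal infinities for a fixed topological filling rather than arbitrary end germs (whose filling, if any, might require different topology). Without such a filling theorem, Theorem~\ref{poisson.thm} cannot be applied to $g$ or to $g_0$ at all, so the subtraction never gets started.

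The paper's proof avoids the issue entirely: it introduces $\Upsilon_g(s) := (2s-n)\otr[R_g(s)-R_g(n-s)]$ and invokes the Hadamard factorization of the \emph{relative} scattering determinant from \cite[Lemma~7.1 and Prop.~7.2]{Borthwick:2008}, which is valid when $g$ and $g_0$ are merely isometric near infinity, to write $\Upsilon_g(s)-\Upsilon_{g_0}(s)$ as the logarithmic derivative of $e^{q(s)}P_g(n-s)P_{g_0}(s)/\bigl(P_g(s)P_{g_0}(n-s)\bigr)$; taking Fourier transforms as in the proof of \cite[Thm.~1.2]{Borthwick:2008} then yields the relative formula directly, with the $d_k$ terms cancelling between $g$ and $g_0$. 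Two smaller remarks: your cancellations of the universal term (via $A(X)=A(X_0)$) and of the $d_k$ corrections are correct but are the easy part; and even granting your filling, subtracting two instances of Theorem~\ref{poisson.thm} would only give the identity on $\bbR-\{0\}$, since that theorem is asserted only there, while the paper's direct relative argument carries no such restriction.
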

\begin{proof}
We define the meromorphic function,
$$
\Upsilon_g(s) := (2s-n) \otr [R_g(s) - R_g(n-s)],
$$
for $s \notin \bbZ/2$.  By \cite[Lemma~7.1 and Prop.~7.2]{Borthwick:2008},
we have
\begin{equation}\label{rel.ups}
\Upsilon_g(s) - \Upsilon_{g_0}(s) = \del_s \log \left[e^{q(s)} 
\frac{P_g(n-s)}{P_g(s)} \frac{P_{g_0}(s)}{P_{g_0}(n-s)}\right],
\end{equation}
where $P_*(s)$ denotes the Hadamard product over the resonance set $\calR_*$,
and $q(s)$ is a polynomial.  (We can use $\calR_*$ rather than $\Rsc_*$ here,
because the extra $d_k$ terms are canceled by the background.)
On the other hand, \cite[Eq.~(8.2) and Lemma~8.1]{Borthwick:2008}
show that $\Upsilon_g(s)$ is essentially the inverse Fourier transform of the continuous part
of the wave trace.  Taking the Fourier transform of (\ref{rel.ups}), exactly as in the
proof of \cite[Thm.~1.2]{Borthwick:2008}, yields the formula given above.
\end{proof}

\begin{corollary}\label{rel.wtrace.cor}
Assuming $\dim X_0$ is even, for metrics in $\calM(X_0, g_0, K_0)$ the resonance set 
$\calR_g$ determines $vol(K,g)$.   In any dimension, for metrics of strictly negative sectional
curvatures in $\calM(X_0, g_0, K_0)$, the resonance set $\calR_g$ determines 
$\inj(X,g)$.
\end{corollary}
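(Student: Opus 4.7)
The plan is to combine the relative Poisson formula just proved with the small- and large-time singularity analyses of the wave $0$-trace, paralleling Corollaries \ref{wtrace.cor} and \ref{inj.cor}. Since the background $(X_0, g_0)$ is part of the fixed data, $\calR_{g_0}$ and $\Theta_{g_0}$ are known, so the relative Poisson formula determines $\Theta_g$ as a distribution on $\bbR$ from $\calR_g$ alone. The task then reduces to extracting the required geometric quantities from the singular and asymptotic behavior of this known distribution.

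For the volume assertion, assume $\dim X_0 = n+1$ is even, i.e., $n$ is odd. Apply the Joshi--S\'a Barreto short-time expansion (\ref{wave.asymp}). For $n$ odd, each power $|\xi|^{n-2k}$ corresponds under Fourier transform to a non-integrable homogeneous $|t|^{-n-1+2k}$ singularity at $t = 0$, so every coefficient $a_k(g)$, and in particular $a_0(g)$ which is a universal constant multiple of $\ovol(X,g)$, is read off from $\Theta_g$. Hence $\ovol(X,g)$ is determined by $\calR_g$. Because the isometry $(X - K, g) \cong (X_0 - K_0, g_0)$ uses a common boundary defining function $\rho$, the $0$-regularized volume of the shared end is the same on both sides; writing $\ovol(X, g) = \vol(K, g) + \ovol(X - K, g)$ and subtracting the analogous expression for $g_0$ yields
$$
\ovol(X, g) - \ovol(X_0, g_0) = \vol(K, g) - \vol(K_0, g_0),
$$
so $\vol(K, g)$ is recovered from $\ovol(X, g)$ and the known background data. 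The step that most warrants care is this $0$-volume identity: one needs the finite-part regularization defining $\ovol$ to localize cleanly to the common end so that the end contributions cancel, which relies on the stipulation in \S\ref{intro.sec} that a single $\rho$ is used throughout $\calM(X_0, g_0, K_0)$, reducing the matter to the elementary decomposition of $\int_{\{\rho > \vep\}} dg$ for $\vep$ small enough that $\{\rho > \vep\} \supset K$.

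For the injectivity assertion under strict negative sectional curvature, invoke Rowlett's singularity formula (\ref{wave.sing}). On $[\vep, \infty)$ the singular support of $\Theta_g$ is precisely $\{k\ell : \ell \in \calL_g, k \in \bbN\} \cap [\vep, \infty)$, with nonzero coefficients $\ell / \sqrt{|\det(1 - P_\ell^k)|}$ (in negative curvature the Poincar\'e maps are hyperbolic, so these determinants do not vanish). Since $\Theta_g$ is determined by $\calR_g$ as a distribution on all of $\bbR$, its singular support on $(0, \infty)$ is determined, hence so is the primitive length spectrum $\calL_g$, and $\inj(X, g)$ as half its minimum. There is essentially no additional obstacle here beyond the singularity analysis already carried out for Corollary \ref{inj.cor}: the relative Poisson formula simply replaces the absolute one, and the background contribution $\Theta_{g_0}$ is smoothed into the ``known'' side of the identity.
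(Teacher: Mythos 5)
Your proposal is correct and follows essentially the same route as the paper: the relative Poisson formula plus the Joshi--S\'a~Barreto expansion (\ref{wave.asymp}) to read off the $t\to 0_+$ singularity and hence $\ovol(X,g)-\ovol(X_0,g_0)=\vol(K,g)-\vol(K_0,g_0)$ in even dimensions, and Rowlett's formula (\ref{wave.sing}) to recover the length spectrum and thus $\inj(X,g)$ under negative curvature. The only cosmetic difference is that you recover $\ovol(X,g)$ absolutely and then decompose, while the paper works directly with the difference of wave traces; the content is identical.
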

\begin{proof}
Using (\ref{wave.asymp}), in the even dimensional case we can deduce
$$
\ovol(X,g) - \ovol(X_0, g_0) = \vol(K,g) - \vol(K_0, g_0)
$$
from $\calR_g$ and $\calR_{g_0}$.  Hence, within $\calM(X_0, g_0, K_0)$
we see that $\calR_g$ determines $\vol(K,g)$.
Similarly, since $\Theta_{g_0}(t)$ is fixed within $\calM(X_0, g_0, K_0)$,
from (\ref{wave.sing}) we see that $\calR_g$ determines the length spectrum
for metrics of negative sectional curvature, and hence the injectivity radius.
\end{proof}

%%%%%%%%%%%
\section{Relative scattering phase}\label{scphase.sec}

For $X \in \calM(X_0,K_0,g_0)$,
the relative scattering determinant $\tau(s)$ and scattering phase $\sigma(\xi)$
were defined in (\ref{tau.def}) and (\ref{sigma.def}), respectively.  
Since $\tau(s)$ is meromorphic, fixing $\sigma(\xi)$ determines $\tau(s)$ as well.
Define the Weierstrass product
\begin{equation}\label{Pg.def}
P_g(s) := \prod_{\zeta\in \Rsc_g} E\Bigl(\frac{s}{\zeta}, n+1\Bigr),
\end{equation}
where $E(w,k)$ is an elementary factor, 
$$
E(w,k) := (1-w) e^{w+w^2/2+\dots+w^k/k}.
$$
Let $P_{g_0}(s)$ be the corresponding product for $\Rsc_{g_0}$.
By \cite[Prop.~7.2]{Borthwick:2008},
\begin{equation}\label{tau.factor}
\tau(s) = e^{q(s)} \frac{P_g(n-s)}{P_g(s)}  \frac{P_{g_0}(s)}{P_{g_0}(n-s)},
\end{equation}
where $q(s)$ is a polynomial of degree at most $n+1$.  
The coefficients of $q(s)$, which has the symmetry $q(s) = - q(n-s)$, are the extra parameters
that we fix by assuming equality of scattering phases instead of resonance sets.
In the other direction, the factorization formula (\ref{tau.factor}) makes it clear that 
$\sigma(\xi)$ determines $\Rsc_g$, modulo the fixed background $\Rsc_{g_0}$.

Another important formula for the relative scattering phase connects it to the
(regularized) traces of the spectral resolutions.  For $s \ne \bbZ/2$ we have
$$
\frac{\del \sigma}{\del \xi}(\xi) = \frac{i\xi}{\pi} \Bigl( \otr \bigl[R_g(\nh+i\xi) - R_g(\nh-i\xi)\bigr] 
- \otr \bigl[R_{g_0}(\nh+i\xi) - R_{g_0}(\nh-i\xi) \bigr]\Bigr).
$$
By the functional calculus, the two terms on the right are the Fourier transforms of
the continuous parts of the respective regularized wave traces, except at $\xi = 0$,
where the $0$-trace can have an anomaly.
By \cite[(8.1--2)]{Borthwick:2008}, we deduce the following:
\begin{proposition}\label{scphase.wavetr}
For $(X_0, g_0)$ conformally compact and hyperbolic near infinity
and $(X,g) \in \calM(X_0, g_0, K_0)$, the 
relative scattering phase $\sigma(\xi)$ determines the relative wave trace
$\Theta_g(t) - \Theta_{g_0}(t)$, as a distribution for $t\in \bbR$.
\end{proposition}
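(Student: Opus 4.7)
The plan is a Fourier inversion of the identity for $\del_\xi \sigma$ displayed just above the proposition. That identity expresses $\del_\xi \sigma(\xi)$ as $(i\xi/\pi)$ times the difference of the 0-traced boundary jumps $\otr[R_g(\nh + i\xi) - R_g(\nh - i\xi)]$ for $g$ and for $g_0$ across the critical line. By the functional calculus for $\sqrt{\Delta_g - n^2/4}$, each such jump is the spectral measure of the shifted Laplacian; its 0-trace, multiplied by $i\xi/\pi$, coincides with the Fourier transform of the continuous part of the wave 0-trace $\Theta_g(t)$ up to a distribution supported at $\xi = 0$ introduced by the regularization. This is precisely the content of \cite[(8.1--2)]{Borthwick:2008} already cited in the paragraph above the statement.

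Subtracting the identity for $g_0$ from that for $g$, the formula for $\del_\xi \sigma$ becomes the Fourier transform of the continuous part of $\Theta_g(t) - \Theta_{g_0}(t)$, plus a finite-order distribution supported at $\xi = 0$ that encodes the difference of the two local regularization anomalies. Because $\sigma$ is specified as a function with the normalizations $\sigma(0) = 0$ and $\sigma(-\xi) = -\sigma(\xi)$, its derivative $\del_\xi \sigma$ is determined as a tempered distribution in its entirety, including any contribution at $\xi = 0$. The discrete-spectrum contributions to the wave traces (a sum of exponentials indexed by the finitely many $L^2$-eigenvalues of $\Delta_g$, respectively $\Delta_{g_0}$) appear as real-axis resonances in $\calR_g$ and $\calR_{g_0}$, and are therefore visible in $\tau$ through the Hadamard factorization (\ref{tau.factor}); they are thus determined by $\sigma$ together with the fixed background.

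Inverse Fourier transforming $\del_\xi \sigma$ then recovers $\Theta_g(t) - \Theta_{g_0}(t)$ as a distribution on all of $\bbR$. The main technical care, already packaged inside \cite[(8.1--2)]{Borthwick:2008}, is the identification of the 0-traced spectral density with the Fourier transform of the wave 0-trace and the correct treatment of the anomaly at $\xi = 0$, so that the inversion produces the full relative wave trace, including any delta-type contributions at $t = 0$, rather than merely its smooth part on $\bbR \setminus \{0\}$.
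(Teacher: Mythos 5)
Your proposal follows essentially the same route as the paper: the paper's proof is exactly the Fourier inversion of the displayed identity for $\del_\xi\sigma$, identifying the resolvent jumps with the Fourier transforms of the continuous parts of the wave $0$-traces via the functional calculus and deferring the technical content (including the anomaly at $\xi=0$ and the $t=0$ singularity, which corresponds to the $|\xi|\to\infty$ behavior of $\sigma$) to \cite[(8.1--2)]{Borthwick:2008}. Your additional remarks on recovering the discrete-spectrum contributions through the Hadamard factorization (\ref{tau.factor}) are consistent with the paper's framework and simply make explicit what the citation absorbs.
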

Note that the big singularity of the wave trace at $t=0$ is included in this result, because
it corresponds to the behavior of $\sigma(\xi)$ as $|\xi| \to \infty$.

%%%%%%%%%
\section{Relative heat invariants}\label{heat.sec}

Suppose that $(X, g)$ is conformally compact and hyperbolic near infinity,
and let $H_g(t;z,z')$ denote the heat kernel  associated to $\Delta_g$.
The restriction of the heat kernel to the diagonal has the usual local expansion
as $t\to 0$,
\begin{equation}\label{loc.heat}
H_g(t;z,z) \sim  t^{-\frac{n+1}2} \sum_{j=0}^\infty  t^j \alpha_j(g; z).
\end{equation}
In our setting, the heat operator is not trace class, and 
the local geometric invariants $\alpha_j(g)$ are not integrable over $(X,g)$.
To obtain global invariants we subtract off contributions from the background metric
$(X_0,g_0)$.  Since $\alpha_j(g)$ agrees with $\alpha_j(g_0)$ on $X-K \cong X_0 - K_0$,
we define the relative heat invariant as
\begin{equation}\label{agjj.def}
a_j(g,g_0) := \int_K \alpha_j(g)\>dg - \int_{K_0} \alpha_j(g_0)\>dg_0.
\end{equation}

By the formula connecting the heat and wave operators,
\begin{equation}\label{heat.wave}
e^{-u (\Delta_g - n^2/4)} = \frac{1}{\sqrt{\pi u}} \int_0^\infty e^{-t^2/4u}  
\cos \left(t \sqrt{\smash[b]{\Delta_g - n^2/4}}\,\right)\>dt,
\end{equation}
and the characterization of the wave kernel in Joshi-S\'a Barreto \cite{JS:2001},
we can see that the heat kernel has a well-defined $0$-trace (i.e.~its kernel is
polyhomogeneous in $\rho$ as $\rho \to 0$).

We could try to define regularized heat invariants directly from the $0$-trace
of the $\alpha_j(g)$'s.  For conformally compact Einstein manifolds, Albin \cite{Albin:2009} 
shows that that these $0$-traces give well-defined invariants.  In our situation it
is simpler to consider only the expansion of the relative heat trace, and the corresponding
relative heat invariants, for which
any possible dependence on the regularization scheme is effectively canceled.

\begin{proposition}\label{heattr.prop}
The difference of heat $0$-traces admits an expansion in terms of relative heat invariants,
$$
\otr \bigl( e^{-t\Delta_g}\bigr) - \otr \bigl( e^{-t\Delta_{g_0}} \bigr) 
\sim t^{-\frac{n+1}2} \sum_{j=0}^\infty  t^j a_j(g, g_0).
$$
\end{proposition}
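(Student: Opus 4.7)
The plan is to reduce the proposition to a standard local heat expansion together with a vanishing exterior correction. Using the isometry $\phi: (X-K, g) \cong (X_0 - K_0, g_0)$, I first decompose
\begin{equation*}
\otr\bigl(e^{-t\Delta_g}\bigr) - \otr\bigl(e^{-t\Delta_{g_0}}\bigr) = I_K(t) + I_\infty(t),
\end{equation*}
where $I_K(t) := \int_K H_g(t;z,z)\,dg - \int_{K_0} H_{g_0}(t;z,z)\,dg_0$, and $I_\infty(t)$ is the 0-trace on $X-K$ of $H_g(t;z,z) - \phi^* H_{g_0}(t;\phi(z),\phi(z))$. Such a splitting is legitimate because each restricted heat kernel is polyhomogeneous in $\rho$ near infinity, which follows by combining the heat-wave formula \eqref{heat.wave} with the wave kernel structure of Joshi--S\'a~Barreto \cite{JS:2001}.

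The compact contribution $I_K(t)$ is handled directly: since the local expansion \eqref{loc.heat} is uniform on compact subsets of $X$, integration term by term gives
\begin{equation*}
I_K(t) \sim t^{-(n+1)/2} \sum_{j\ge 0} t^j\, a_j(g, g_0),
\end{equation*}
by the definition \eqref{agjj.def} of the relative invariants. The heart of the proof is then to show $I_\infty(t) = O(t^\infty)$. To this end, fix $\delta > 0$ and split $X - K = A_\delta \cup B_\delta$ with $A_\delta := \{z : d_g(z, K) \le \delta\}$ (a compact annular neighborhood of $K$ in $X$) and $B_\delta := \{z : d_g(z,K) > \delta\}$. Since $\alpha_j(g) = \phi^* \alpha_j(g_0)$ pointwise on $X - K$, the expansions \eqref{loc.heat} for $H_g$ and for $\phi^* H_{g_0}$ cancel term by term; the uniform remainder of \eqref{loc.heat} on the compact set $A_\delta$ then yields $|H_g - \phi^* H_{g_0}| = O_N(t^N)$ uniformly on $A_\delta$ for every $N$. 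On $B_\delta$, I would invoke finite propagation speed for $\cos(s\sqrt{\Delta_g - n^2/4})$ via \eqref{heat.wave}: the wave times $s \le \delta$ probe only the $\delta$-neighborhood of $z$, where $g$ and $\phi^* g_0$ coincide, so they cancel in the difference, while the times $s \ge \delta$ are suppressed by the Gaussian $e^{-s^2/4t}$, giving a pointwise bound
\begin{equation*}
\bigl|H_g(t;z,z) - \phi^* H_{g_0}(t;\phi(z),\phi(z))\bigr| \le C\, e^{-\delta^2/ct}
\end{equation*}
on $B_\delta$, uniform in $z$.

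The main technical obstacle is reconciling this pointwise Gaussian bound with the 0-trace regularization near $\partial X$: one must verify that the integral over $B_\delta$, properly regularized via the finite part in $\rho$, is finite and inherits exponential decay of the form $e^{-\delta^2/c't}$ in $t$. The polyhomogeneity of $H_g(t;z,z) - \phi^* H_{g_0}(t;\phi(z),\phi(z))$ together with the fact that its leading boundary-asymptotic terms must cancel (because the metrics coincide near $\partial X$) is what makes the regularized integral well-behaved. Once the exterior contribution is shown to be $O(t^\infty)$, the asymptotic expansion of the difference of heat 0-traces follows with the claimed coefficients $a_j(g, g_0)$.
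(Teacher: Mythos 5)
Your reduction to a compact piece plus an exterior piece, and your treatment of the compact annulus $A_\delta$ (cancellation of the local coefficients $\alpha_j$ on the region where $g$ and $g_0$ are isometric), match the paper's strategy. Where you diverge is the exterior estimate: you propose finite propagation speed for $\cos\bigl(s\sqrt{\Delta_g-n^2/4}\,\bigr)$ through the heat--wave formula (\ref{heat.wave}), whereas the paper runs a cutoff Duhamel formula and bounds the resulting operators $A_1(u)$, $A_2(u)$ in Hilbert--Schmidt norm using the Cheng--Li--Yau Gaussian heat-kernel estimates, concluding that the cutoff difference of heat operators has trace norm $O(t^\infty)$. Your route is viable in principle (it is essentially the Cheeger--Gromov--Taylor argument, and the bounded geometry needed to convert the distributional wave kernel plus a rapidly decaying spectral multiplier into pointwise kernel bounds is available here), but as written it has a genuine gap exactly at the point you flag as ``the main technical obstacle.'' A bound $|H_g(t;z,z)-H_{g_0}(t;z,z)|\le C e^{-\delta^2/ct}$ that is merely \emph{uniform in $z$} on the infinite-volume region $B_\delta$ gives you nothing: the region has infinite volume, and the finite-part regularization of a bounded integrand is not controlled by its sup norm (the $0$-integral is not monotone, and extracting the finite part requires knowledge of the boundary expansion, not a size estimate). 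Your appeal to ``polyhomogeneity together with cancellation of the leading boundary-asymptotic terms'' is an assertion, not an argument; it does not yield the required $O(t^\infty)$ bound for the regularized integral, and this is precisely the step the proposition turns on.

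The repair is to make the bound depend on the point: for $z$ with $r=d_g(z,K)>\delta$ the metrics agree on the ball $B(z,r)$, so finite propagation speed gives $|H_g(t;z,z)-H_{g_0}(t;z,z)|\le C e^{-r^2/ct}$, and since the volume of $\{d_g(\cdot,K)\le r\}$ grows only exponentially in $r$ for a manifold hyperbolic near infinity, the exterior integral is then \emph{absolutely convergent} and itself $O(t^\infty)$ --- no finite-part regularization is needed at all. This is exactly what the paper's Hilbert--Schmidt/trace-class estimate accomplishes: the separation $\delta$ between the supports of $\eta$ and $\psi_2$ produces kernels decaying rapidly both in $t$ and at spatial infinity, so the $0$-integral in (\ref{relh.claim}) collapses to an ordinary convergent integral bounded by a trace norm that is $O(t^\infty)$. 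If you restate your $B_\delta$ estimate with the $z$-dependent distance (or replace it by the Duhamel/trace-norm argument), your proof closes; without that, the exterior contribution is not controlled.
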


\begin{proof}
By the local form of the heat expansion (\ref{loc.heat}), we see immediately that
$$
\int_K H_g(t;z,z)\>dg(z) - \int_K H_{g_0}(t;z,z)\>dg_0(z) \sim 
t^{-\frac{n+1}2} \sum_{j=0}^\infty  t^j a_j(g, g_0).
$$
Hence the goal is to show that
\begin{equation}\label{relh.claim}
\Oint_{X_0-K_0} \bigl[ H_{g}(t;z,z) - H_{g_0}(t;z,z) \bigr] dg_0(z) = O(t^\infty),
\end{equation}
as $t \to 0$, where we implicitly make use of the isometry $(X-K, g) \cong (X_0 - K-0, g_0)$
to combine the two $0$-integrals. 

To estimate (\ref{relh.claim}) near infinity we introduce
cutoff functions $\psi_1, \psi_2 \in \cinf(X_0-K_0)$, both zero on $\del K_0$ and 1 near infinity,
with $\psi_1 = 1$ on some open neighborhood of the support of $\psi_2$.  
After pullback by isometry (which we suppress from the notation), 
we can regard $\psi_2 e^{t\Delta_g} \psi_1$ as an operator on $L^2(X_0-K_0, dg_0)$.
By integrating
$$
\frac{d}{du} \Bigl[\psi_2 e^{-u\Delta_g} \psi_1 e^{-(t-u)\Delta_{g_0}} \psi_1 \Bigr] 
= - \psi_2 e^{-u\Delta_g} [\Delta_{g_0}, \psi_1] e^{-(t-u)\Delta_{g_0}} \psi_1,
$$
we obtain a cutoff version of Duhamel's formula, 
$$
\psi_2 e^{-t\Delta_g} \psi_1 - \psi_1 e^{-t\Delta_{g_0}} \psi_2
= - \int_0^t \psi_2 e^{-u\Delta_g} [\Delta_{g_0}, \psi_1] e^{-(t-u) \Delta_{g_0}} \psi_2\>du.
$$
Choose $\eta \in \cinf_0(X_0-K_0)$ such that $\eta = 1$ on the support of
$[\Delta_{g_0},\psi_1]$ and so that the supports of $\eta$ and $\psi_2$ are separated by distance
$\delta>0$.  We can rewrite the above formula as
\begin{equation}\label{psiHpsi}
\psi_2 e^{-t\Delta_g} \psi_1 - \psi_1 e^{-t\Delta_{g_0}} \psi_2
= -  \int_0^t A_1(u) A_2(t-u)\>du,
\end{equation}
where 
$$
A_1(u) := \psi_2 e^{-u\Delta_g} \eta,
$$
and
$$
A_2(u) :=  [\Delta_{g_0}, \psi_1] e^{-u \Delta_{g_0}} \psi_2.
$$

Using the estimates of Cheng-Li-Yau \cite[Cor.~8]{CLY:1981} for the heat kernel on
complete manifolds with bounded curvatures, 
we can estimate the kernels of the $A_i(u)$ by
$$
A_i(u;z,w) \le C_i u^{-(n+i)/2} e^{-cd(z,w)^2/u}.
$$
Since the kernels are smooth and decay rapidly at infinity, we conclude that 
the $A_i(u)$'s are Hilbert-Schmidt.  Moreover, because the $d(z,w)\ge \delta$ in the
supports of the cutoffs, we can estimate the Hilbert-Schmidt norms by
$$
\norm{A_i(u)}_{2} \le C_i e^{-c\delta^2/u}.
$$
From (\ref{psiHpsi}) we can then estimate the trace norm
$$
\bnorm{\psi_2 e^{-t\Delta_g} \psi_1 - \psi_1 e^{-t\Delta_{g_0}} \psi_2}_1
= O(t^{\infty}).
$$
This shows that the $0$-integral in (\ref{relh.claim}) is a convergent integral and that
$$
\int_{X_0-K_0} \psi_2(z) \bigl[ H_{g}(t;z,z) - H_{g_0}(t;z,z) \bigr] dg_0(z) = O(t^\infty),
$$

Finally, on $X_0-K_0$, we have $\alpha_j(g) = \alpha_j(g_0)$, so that the estimate,
$$
\int_{X_0-K_0} (1-\psi_2(z)) \bigl[ H_{g}(t;z,z) - H_{g_0}(t;z,z) \bigr] dg_0(z) = O(t^\infty),
$$
follows from the local heat expansion (\ref{loc.heat}).
\end{proof}

If we assume knowledge of the the relative scattering phase, then it is relatively easy
to recover relative heat invariants via the wave trace. 
\begin{proposition}\label{phase.heat}
For $(X_0, g_0)$ conformally compact and hyperbolic near infinity
and $(X,g) \in \calM(X_0, g_0, K_0)$, the relative scattering phase $\sigma(\xi)$ determines the 
relative heat invariants $a_j(g, g_0)$.
\end{proposition}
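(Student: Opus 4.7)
The plan is to connect relative heat $0$-traces to the relative wave trace via the subordination formula (\ref{heat.wave}), and then read off the heat invariants from the small-$u$ asymptotic expansion provided by Proposition~\ref{heattr.prop}.

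Applying (\ref{heat.wave}) on the diagonal, taking $0$-traces, and using that $\Theta_g(t)$ is even in $t$, one obtains
\[
\otr\bigl(e^{-u\Delta_g}\bigr) - \otr\bigl(e^{-u\Delta_{g_0}}\bigr)
= \frac{e^{-un^2/4}}{2\sqrt{\pi u}} \bigl\langle e^{-t^2/4u},\, \Theta_g - \Theta_{g_0} \bigr\rangle,
\]
where the right-hand side is the pairing of a Schwartz function against the relative wave-trace distribution on $\bbR$. By Proposition~\ref{scphase.wavetr}, this distribution is determined by $\sigma(\xi)$, and hence the left-hand side, as a function of $u > 0$, is fixed by $\sigma$. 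Proposition~\ref{heattr.prop} then provides the asymptotic expansion
\[
\otr\bigl(e^{-u\Delta_g}\bigr) - \otr\bigl(e^{-u\Delta_{g_0}}\bigr)
\sim u^{-(n+1)/2} \sum_{j=0}^\infty a_j(g,g_0)\, u^j
\]
as $u \to 0^+$, whose coefficients are uniquely recovered from the function by successively peeling off leading terms. This extracts each $a_j(g, g_0)$ from $\sigma(\xi)$.

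The main technical point is to justify the subordination identity above at the level of $0$-traces, namely that the $0$-regularization commutes with the Gaussian integral in $t$. The cleanest route is to work with the difference directly: as in the Duhamel argument from the proof of Proposition~\ref{heattr.prop}, after pullback via the isometry at infinity the operator $e^{-u\Delta_g} - e^{-u\Delta_{g_0}}$ is trace class modulo an error of order $O(u^\infty)$, so the spectral-theorem identity (\ref{heat.wave}) applies to the difference and Fubini is justified by combining the rapid Gaussian decay in $t$ with the tempered nature of the relative wave trace obtained from the wave-kernel characterization of \cite{JS:2001}.
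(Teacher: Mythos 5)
Your proposal is correct and follows essentially the same route as the paper's proof, which likewise combines Proposition~\ref{scphase.wavetr}, the heat--wave relation (\ref{heat.wave}), and the expansion of Proposition~\ref{heattr.prop} to recover the $a_j(g,g_0)$. The only difference is that you spell out the Fubini/subordination justification at the level of $0$-traces, which the paper leaves implicit.
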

\begin{proof}
By Proposition~\ref{scphase.wavetr}, the relative scattering
phase determines the difference of the wave $0$-traces for $g$ and $g_0$.
Using the relation (\ref{heat.wave}) between the heat and wave operators, we can
then apply Proposition~\ref{heattr.prop} to recover the relative heat invariants.
\end{proof}

\bigbreak
In even dimensions we are able to get more information out of the resonance set, 
following the methods of \cite{BJP:2003}, with some restrictions on the background metric.
We will only make application of these results in dimension two
(see \S\ref{2d.sec}), but we may as well give the proof for any even dimension. 

For this argument, assume that $(X,h)$ is conformally compact hyperbolic
and that $g$ is another metric on $X$ that agrees with $h$ to order $\rho^2$.
(This easing of the restriction that $g$ and $h$ agree outside a compact set will
actually be required for the arguments based on conformal uniformization in \S\ref{2d.sec}.)
Let $L^2(X)$ denote the space of square-integrable half-densities, 
with $\hD_g$ and $\hD_{h}$ the Laplacians on $L^2(X)$ associated to the respective 
metrics.  We deduce that $e^{-t\hD_{g}} - e^{-t\hD_{h}}$ is a trace class operator on $L^2(X)$
from Duhamel's formula,
$$
e^{-t\hD_{g}} - e^{-t\hD_{h}} = \int_0^t e^{-u\hD_{g}} (\hD_{g} - \hD_{h}) e^{-(t-u)\hD_{g}}\>du.
$$
In this context the relative heat trace expansion is given by
\begin{equation}\label{gh.heat}
\tr \Bigl[ e^{-t\hD_{g}} - e^{-t\hD_{h}} \Bigr] \sim  t^{-\frac{n+1}2} \sum_{j=0}^\infty  t^j b_j,
\end{equation}
where
\begin{equation}\label{gh.coeff}
b_j := \lim_{\vep \to 0} \left[ \int_{\{\rho \ge \vep\}} \alpha_j(g)\>dg -  \int_{\{\rho \ge \vep\}} \alpha_j(h)\>dh  \right].
\end{equation}

The parametrix construction from \cite{GZ:1995b} shows that the
operator $\hR_g(s)^m - \hR_{h}(s)^m$ is trace class on $L^2(X)$ for $\re s >n$
with $m=(n+3)/2$.  For $\re w\ge m$ and $\re s >n$ define the relative
zeta function
$$
\zeta(w,s) := \tr \bigl[ \hR_g(s)^w - \hR_{h}(s)^w \bigr].
$$
In terms of heat operators, we have
\begin{equation}\label{zeta.heat}
\zeta(w,s) = \frac{1}{\Gamma(w)} \int_0^\infty t^w e^{ts(n-s)} \tr \bigl[ e^{-t\hD_{g}} 
- e^{-t\hD_{h}} \bigr] \frac{dt}{t}.
\end{equation}
The heat expansions as $t \to 0$ can be used to show that $\zeta(w,s)$ extends meromorphically
to $\re w > -1$, with simple poles at $w=\tfrac{n+1}2,\tfrac{n-1}2, \dots$, ending at 1 for $n$ odd 
and continuing to negative half-integers for $n$ even.   In any dimension $\zeta(w,s)$ is analytic at $w=0$,
and so the relative determinant,
$$
\Drel(s) := \exp \bigl[-\del_w\zeta(w,s)|_{w=0}\bigr],
$$
is well-defined for $\re s > n$.

Let $Z_h(s)$ denote the Selberg zeta function for $(X,h)$.
Patterson-Perry \cite[Thm.~1.9]{PP:2001} proved the factorization formula
\begin{equation}\label{zeta.factor}
Z_{h}(s) = e^{p_1(s)} G_\infty(s)^{-\chi(X)} P_{h}(s),
\end{equation}
where $p_1(s)$ is a polynomial of degree at most $n+1$ and 
$$
G_\infty(s) = s \prod_{k=1}^\infty E(-\tfrac{s}{k}, n+1)^{h_n(k)},
$$
with
$$
h_n(k) := (2k+n) \frac{(k+1)\dots (k+n-1)}{n!}.
$$
The formula (\ref{zeta.factor}) remains valid even when $(X,h) = \bbH^{n+1}$; in this case $Z_h(s) := 1$,
and the poles of $G_\infty(s)^{-1}$ cancel the zeroes of $P_h(s)$.

From the proof of \cite[Prop~7.2]{Borthwick:2008} we see that
$$
\Drel(s) := e^{p_2(s)} \frac{P_g(s)}{P_{h}(s)},
$$
with $p_2(s)$ also a polynomial of degree at most $n+1$.
Thus we have
\begin{equation}\label{Drel.factor}
\Drel(s) :=  \frac{e^{p(s)} P_g(s)}{Z_{h}(s) G_\infty(s)^{\chi(X)}},
\end{equation}
for $p(s)$ a polynomial of degree at most $n+1$.

\begin{proposition}\label{res.det}
Suppose that $(X, h)$ is a conformally compact hyperbolic metric with $\dim X$ even, 
and $g$ is a metric hyperbolic near infinity that agrees with $h$ to order $\rho^2$.
Then the Euler characteristic $\chi(X)$ and 
the resonance set $\Rsc_g$ together determine the product $\Drel(s)Z_h(s)$ and
all of the relative heat invariants $b_j$ defined by (\ref{gh.coeff}).
When $\dim X = 2$, the set $\calR_g = \Rsc_g$ alone determines $\chi(X)$, $\Drel(s)Z_h(s)$,
and the relative heat invariants.
\end{proposition}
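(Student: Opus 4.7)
The plan is to use the factorization (\ref{Drel.factor}), which I rewrite as
\begin{equation*}
\Drel(s)\,Z_h(s) \;=\; e^{p(s)}\,P_g(s)\,G_\infty(s)^{-\chi(X)},
\end{equation*}
where $p(s)$ is a polynomial of degree at most $n+1$. Here $P_g(s)$ is the Weierstrass product (\ref{Pg.def}) built directly from $\Rsc_g$, and $G_\infty(s)$ depends only on the ambient dimension; so the right-hand side is known from $\Rsc_g$ and $\chi(X)$ modulo the $n+2$ coefficients of $p$. Pinning $p$ down and simultaneously recovering the $b_j$'s is the heart of the proof.

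To carry this out, I would first derive the large-$s$ asymptotic expansion of $\log\Drel(s)=-\partial_w\zeta(w,s)|_{w=0}$ by substituting the heat expansion (\ref{gh.heat}) into the Mellin representation (\ref{zeta.heat}) and computing the Laurent expansion of $\Gamma(w+j-(n+1)/2)/\Gamma(w)$ at $w=0$. Since $(n+1)/2$ is an integer in our setting, the indices $j=0,1,\dots,(n+1)/2$ produce terms of the form $b_j\,[s(s-n)]^{(n+1)/2-j}(\log[s(s-n)] - H_{(n+1)/2-j})$ (the $\log$ corrections coming from simple poles of the numerator canceled by the zero of $1/\Gamma(w)$), while the tail $j>(n+1)/2$ contributes only negative powers of $s$ weighted by the $b_j$. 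Because $(X,h)$ is everywhere hyperbolic, $Z_h(s)$ is an Euler product over closed geodesics and $\log Z_h(s)=O(e^{-cs})$ as $\re s\to+\infty$; so (\ref{Drel.factor}) gives
\begin{equation*}
\log\Drel(s) \;=\; p(s) + \log P_g(s) - \chi(X)\log G_\infty(s) + O(e^{-cs}),
\end{equation*}
whose right-hand side has an explicit large-$s$ expansion computable from the canonical-product formulas.

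Matching the two expansions term by term then yields a triangular linear system. The $s^{n+1-2j}\log s$ coefficients read off $b_j$ for $0\le j\le (n+1)/2$, the negative-power coefficients read off the remaining $b_j$, and the pure polynomial coefficients of degree at most $n+1$, after subtracting the now-known $b_j$-contributions from the left side, identify the coefficients of $p(s)$ uniquely. Feeding $p(s)$ back into the factorization produces $\Drel(s)Z_h(s)$, completing the general claim.

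For $\dim X=2$ we additionally need $\chi(X)$ from $\calR_g$ alone, using $\Rsc_g=\calR_g$ in this case (since $d_k\equiv 0$). The Poisson formula (Theorem~\ref{poisson.thm}) together with Corollary~\ref{wtrace.cor} shows that $\calR_g$ determines the entire wave $0$-trace as a distribution, hence all coefficients of its $t\to 0$ expansion, and via (\ref{heat.wave}) this determines the renormalized heat coefficients of $(X,g)$. For a surface agreeing with a hyperbolic metric to order $\rho^2$, the order-one heat coefficient equals a nonzero universal constant times $\chi(X)$ plus a known multiple of $\ovol(X,g)$ (already determined by Corollary~\ref{wtrace.cor}), which reads off $\chi(X)$. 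The main obstacle throughout is verifying that the matching step really produces a triangular system: one must bookkeep the polynomial pieces $H_{(n+1)/2-j}[s(s-n)]^{(n+1)/2-j}$ carefully so that the portion of $\log\Drel(s)$ attributable to the $b_j$ with $j\le (n+1)/2$ is cleanly separated from the unknown polynomial $p(s)$, an exercise in comparing two explicit formal expansions.
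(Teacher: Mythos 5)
Your argument for the main claim is essentially the paper's own proof: expand $\log \Drel(s)$ for large $\re s$ via (\ref{zeta.heat}) and the relative heat expansion, note that $\log Z_h(s)$ decays exponentially so that the factorization (\ref{Drel.factor}) makes $\log\Drel(s)-p(s)$ known from $\Rsc_g$ and $\chi(X)$, and then use the logarithmic terms (present precisely because $\tfrac{n+1}{2}\in\bbZ$) to separate the $b_j$ with $j\le\tfrac{n+1}{2}$ from the unknown polynomial, the negative powers giving the remaining $b_j$ and the polynomial part then pinning down $p(s)$; your extra bookkeeping of the harmonic-number corrections is the same point the paper compresses into ``because of the log terms\dots''. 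Where you genuinely diverge is the $\dim X=2$ statement: the paper cites \cite[Prop.~5.8]{BJP:2003} and extracts $\chi(X)$ from the \emph{same} determinant asymptotics, using the known large-$s$ expansion of $\log G_\infty(s)$ together with the vanishing of $b_1$ (Gauss--Bonnet), whereas you instead invoke the Poisson formula and Corollary~\ref{wtrace.cor} to get the wave $0$-trace coefficients from $\calR_g$ and then read $\chi(X)$ out of the order-one coefficient via a renormalized Gauss--Bonnet identity. Your route is legitimate in dimension two (the Poisson formula applies since $g$ is hyperbolic near infinity, and the Guillop\'e--Zworski version does not need an exactly hyperbolic background), and it has the virtue of staying within results already quoted in the paper rather than importing the $\log G_\infty$ asymptotics; its cost is that it brings in the wave-trace machinery and leans on two facts you assert rather than prove, namely that the $t\to0$ coefficients of the heat/wave $0$-trace are exactly the $0$-integrals of the local heat invariants (Joshi--S\'a~Barreto) and that $\Oint_X K(g)\,dg=2\pi\chi(X)$ for surfaces hyperbolic near infinity -- both standard, but they deserve explicit citation or a short gluing argument on the compact core.
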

\begin{proof}
We examine the asymptotic expansion of $\log \Drel(s)$ as $\re s \to \infty$.  By
(\ref{zeta.heat}) and the heat expansion, we have
\begin{equation}\label{Drel.asymp}
\begin{split}
\log \Drel(s) & \sim \sum_{j=0}^{\frac{n+1}2} c_{n,j} b_j [s(s-n)]^{\frac{n+1}2 - j} \log [s(s-n)] \\
&\qquad + \sum_{j>\frac{n+1}2} c_{n,j} b_j [s(s-n)]^{\frac{n+1}2 - j},
\end{split}
\end{equation}
where the $c_{n,j}$'s are nonzero combinatorial constants.

On the other hand, consider the factorization (\ref{Drel.factor}).  The log of $Z_{h}(s)$
decays exponentially as $\re s \to \infty$.  Thus $\chi(X_0)$ and $\Rsc_g$ together
determine the asymptotic expansion of $p(s) + \log \Drel(s)$ as $\re s \to \infty$, where
$p(s)$ is the polynomial appearing in (\ref{Drel.factor}).  Because of the log terms
in (\ref{Drel.asymp}), both the heat invariants and the
coefficients of $p(s)$ are fixed by this expansion.

The $n=1$ case of this result was proven in \cite[Prop.~5.8]{BJP:2003}.
in this case, the known asymptotics of $\log G_\infty(s)$ and the
vanishing of the first relative heat invariant (by Gauss-Bonnet), allow
the Euler characteristic also to be determined from $\Rsc_g$.
\end{proof}

The amusing feature of Proposition~\ref{res.det} is that no information
on $\Rsc_h$ is needed for the result, because of the structure of the Selberg
zeta function.  
In odd dimensions, the corresponding argument breaks down because the asymptotic 
formula corresponding to (\ref{Drel.asymp}) is 
$$
\log \Drel(s) \sim \sum_{j=0}^\infty c_{n,j} b_j [s(s-n)]^{\frac{n+1}2 - j},
$$
i.e.~there are no logarithmic terms.
The absence of such terms means we cannot rule out cancelation between the
coefficients of $p(s)$ and the relative heat invariants $b_0, \dots, b_{n/2}$.

\section{Finiteness of topological types}\label{finite.sec}

For compact manifolds dimensions greater than 3, the heat invariants do not contain 
enough information to  establish $C^k$ bounds on the curvatures.   This problem
of course persists in the non-compact case.  However, we can certainly use
spectral information to control the topological type, following arguments of 
\cite{BPP:1992}.  The crucial result is the following:
\begin{theorem}[Grove-Petersen-Wu \cite{GPW:1990}, Thm.~C]\label{gpw.thm}
The class of closed Riemannian $m$-manifolds $M$ with injectivity radius bounded below
and volume bounded above contains at most finitely many homeomorphism
types if $m \ge 4$, and only finitely many diffeomorphism types if $m\ge 5$.
\end{theorem}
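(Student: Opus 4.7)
The plan is to convert this finiteness theorem into a compactness-plus-stability statement: embed the class of manifolds in question into a precompact metric space on which the underlying homeomorphism (or diffeomorphism) type is locally constant, so that finiteness follows by a standard open-cover argument on a compact space. This strategy has three ingredients: a uniform local geometric control lemma, a precompactness result, and a quantitative topological stability result.

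For the uniform control, I would use $\inj(M) \ge i_0$ to produce at every point $p$ an embedded geodesic ball $B(p, i_0/2)$ on which the exponential map is a diffeomorphism onto normal coordinates. A packing argument, bounding $\vol B(p, r)$ below in terms of $i_0, m$ and combining with the global bound $\vol(M) \le V$, then limits the cardinality of any maximal $i_0/4$-separated net by some $N = N(i_0, V, m)$, which in particular yields a uniform diameter bound. Any sequence $(M_k, g_k)$ drawn from the class then admits a Gromov--Hausdorff subsequential limit $(M_\infty, d_\infty)$, and the uniform lower bound on $\inj$ together with the coordinate covering ensures that $M_\infty$ is again a topological (indeed $C^{1,\alpha}$) manifold of the same dimension $m$, with $g_k \to g_\infty$ in a weak enough sense to recover locally uniform control.

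The main obstacle is the stability step: promoting $(M_k, g_k) \to (M_\infty, g_\infty)$ in Gromov--Hausdorff (or Lipschitz) sense to the topological conclusion $M_k \cong M_\infty$ for all large $k$. This is the heart of \emph{controlled topology}. Using the coordinate covering of the first step, one constructs, for $k$ large, approximation maps $f_k \colon M_k \to M_\infty$ which are $\varepsilon_k$-homotopy equivalences with $\varepsilon_k \to 0$; the key step is to verify the hypotheses of the Chapman--Ferry $\alpha$-approximation theorem (a uniform local contractibility estimate, which follows from the injectivity radius and curvature-free comparison estimates in normal coordinates). Once $\varepsilon_k$ drops below the critical constant determined by $M_\infty$, the $\alpha$-approximation theorem upgrades $f_k$ to a genuine homeomorphism; this yields the finiteness of homeomorphism types for all $m \ge 4$, with dimension four requiring the separate work of Freedman to handle the topological category.

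Finally, to pass from homeomorphism to diffeomorphism types in dimensions $m \ge 5$, I would invoke Kirby--Siebenmann smoothing theory: the obstruction to smoothing a topological homeomorphism between two smooth manifolds lives in $H^3(M;\mathbb{Z}/2)$, and the set of smooth structures on a fixed topological $m$-manifold forms a torsor over a finite group determined by $H^i(M;\Gamma_i)$ for homotopy groups $\Gamma_i$ of $\mathrm{PL}/\mathrm{O}$. Combined with the homeomorphism finiteness already established, this produces finitely many diffeomorphism types. The breakdown of smoothing theory in dimension $4$ (where $\mathrm{TOP}/\mathrm{O}$ is intractable) is precisely what forces the dimensional restriction on the diffeomorphism half of the statement.
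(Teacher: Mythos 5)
The paper does not prove this statement at all: it is quoted as Theorem~C of Grove--Petersen--Wu \cite{GPW:1990} and used as a black box, so there is no internal proof to compare against. Your sketch is in fact a reasonable reconstruction of the strategy of the cited paper (packing and diameter bounds from the injectivity radius and volume, Gromov--Hausdorff precompactness, controlled topology via the Chapman--Ferry $\alpha$-approximation theorem with Freedman--Quinn handling dimension four, and Kirby--Siebenmann smoothing theory for diffeomorphism finiteness in dimension at least five).

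One step as you state it is, however, not justified and would need repair. With only $\inj(M)\ge i_0$ and $\vol(M)\le V$ there are no curvature or Ricci bounds, so no Cheeger--Gromov or Anderson type convergence is available: the Gromov--Hausdorff limit $M_\infty$ cannot be asserted to be a $C^{1,\alpha}$ manifold, nor even a topological manifold. All one gets is a compact metric space that is uniformly locally contractible (radial geodesic contraction inside balls of radius $<i_0$ gives the contractibility function $\rho(\vep)=\vep$), hence an ANR. This matters because the Chapman--Ferry theorem requires a closed \emph{manifold} as target, and its critical constant depends on that target, so it cannot be applied with $M_\infty$ in the role you assign it. The standard repair, and essentially what Grove--Petersen--Wu do, is to argue by contradiction: given infinitely many pairwise non-homeomorphic members of the class, extract a GH-convergent subsequence, fix one member $M_j$ close to the limit, and use the common contractibility function to build $\vep$-controlled homotopy equivalences $M_k\to M_j$ for $k$ large; the $\alpha$-approximation theorem applied with the fixed manifold $M_j$ as target (Freedman--Quinn in dimension $4$) then yields homeomorphisms $M_k\cong M_j$, a contradiction. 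With that adjustment your outline, including the Kirby--Siebenmann step for $m\ge 5$, matches the argument of the cited reference.
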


Fix an asymptotically hyperbolic manifold $(X_0, g_0)$ with boundary defining function
$\rho$ and a compact subset $K_0 \subset X_0$.   Let $\calM(X_0, g_0, K_0)$ denote the 
class of manifolds $X,g$ such that $(X-K, g) \cong (X_0 - K_0, g_0)$ for some compact $K\subset X$.
We will assume that $0$-volumes for elements of  $\calM(X_0, g_0, K_0)$ are defined by
boundary defining functions that agree with $\rho$ on $X-K$.  

\begin{corollary}\label{gpw.cor}
The set of manifolds in $\calM(X_0, g_0, K_0)$ with injectivity radius bounded below and 
$\vol(K,g)$ bounded above contains at most finitely many homeomorphism
types if $\dim X_0 \ge 4$, and only finitely many diffeomorphism types if $\dim X_0\ge 5$.
\end{corollary}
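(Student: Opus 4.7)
The strategy is to reduce to Theorem~\ref{gpw.thm} by \emph{capping off} each $(X,g)\in\calA$ into a closed Riemannian manifold whose injectivity radius and volume are uniformly controlled across $\calA$.

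Fix once and for all a compact submanifold $N_0\subset X_0$ with smooth boundary such that $K_0\subset\mathrm{int}(N_0)$, and a compact Riemannian manifold $(C,g_C)$ with $\partial C$ diffeomorphic to $\partial N_0$ and $g_C$ agreeing with $g_0$ to infinite order in a collar of $\partial C$. For each $(X,g)\in\calA$, the isometry $(X-K,g)\cong(X_0-K_0,g_0)$ supplies a compact submanifold $N\subset X$ isometric to $(N_0,g_0|_{N_0})$ and containing $K$; set
$$
\hat X := N\cup_{\partial N} C,\qquad \hat g := g|_N \text{ on } N,\ \hat g := g_C \text{ on } C.
$$
Because $g=g_0$ in a neighborhood of $\partial N$ inside $N$, the metric $\hat g$ is $\cinf$.

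\textbf{Uniform bounds.} The volume decomposes as
$$
\vol(\hat X,\hat g) = \vol(K,g) + \vol(N_0\setminus K_0,g_0) + \vol(C,g_C),
$$
whose last two summands are fixed and whose first is bounded above by hypothesis. For the injectivity radius, note that a fixed tubular neighborhood $U$ of $\partial N$ in $\hat X$ is isometric (via the identifications) to the corresponding neighborhood in the \emph{fixed} closed manifold $\hat X_0 := N_0\cup_{\partial N_0} C$; this yields a lower bound $\inj_{\hat g}|_U \ge \iota_1>0$ independent of $(X,g)$. For $x\in N\setminus U$, the distance $d_{\hat g}(x,\partial N)$ exceeds some $r_0>0$, so any geodesic loop through $x$ of length $<2r_0$ stays inside $N$ and is a loop of $(X,g)$, hence has length $\ge 2\inj(X,g)\ge 2\iota_0$; symmetrically for $x\in C\setminus U$. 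Combining, $\inj(\hat X,\hat g)\ge \min(\iota_0,\iota_1,r_0)$, uniformly on $\calA$.

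\textbf{Transfer.} Theorem~\ref{gpw.thm}, applied to the family $\{\hat X:(X,g)\in\calA\}$ in dimension $m=\dim X_0$, yields finitely many homeomorphism types when $m\ge 4$, and finitely many diffeomorphism types when $m\ge 5$. Since $X$ is recovered from $\hat X$ by excising the fixed submanifold $C$ and gluing back the fixed end $X_0\setminus N_0$ along $\partial N_0$, the assignment $[\hat X]\mapsto [X]$ on (homeo- or diffeomorphism) classes is finite-to-one: for a fixed $\hat X$, the admissible separating hypersurfaces $\partial C\subset\hat X$ come from a single isotopy class dictated by the common end structure (and in any case from a finite collection, since we only ever use the fixed embedding determined by our construction). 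Hence $\calA$ contains finitely many homeomorphism types for $m\ge 4$ and finitely many diffeomorphism types for $m\ge 5$.

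\textbf{Main obstacle.} The only non-routine step is the injectivity radius control: a closed geodesic of $\hat g$ can in principle pass through both $N$ and $C$, and could a priori be shorter than every closed geodesic of $X$. This is defeated by the observation that a thick neighborhood of $\partial N$ carries the \emph{fixed} metric $g_0$ on both sides, so mixed closed geodesics are confined to the geometry of the fixed reference manifold $\hat X_0$ and inherit a uniform lower length bound from it.
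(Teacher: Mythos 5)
Your capping construction is a legitimate variant of the paper's argument (the paper instead doubles, gluing two copies of $K$ along a fixed neck carrying the background metric), and the volume bound and the appeal to Theorem~\ref{gpw.thm} are fine. The genuine gap is in the injectivity radius estimate, precisely at the point your "main obstacle" paragraph tries to dismiss. Your claim that the isometry of the fixed neighborhood $U\subset\hat X$ with the corresponding neighborhood of $\hat X_0$ "yields a lower bound $\inj_{\hat g}|_U\ge\iota_1$" is not valid: the injectivity radius at a point is not a local invariant of any neighborhood of that point. A geodesic loop based at $x\in U$ may leave $U$, run through the perturbed region $K$ (where the metric varies over $\calA$), and return; the local isometry with $\hat X_0$ gives no control over such loops, and saying that "mixed" geodesics are "confined to the geometry of $\hat X_0$" is simply false once they enter $K$. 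What is actually needed, and what the paper proves via its $Z_\delta$, $Z_{2\delta}$ case analysis, is a trichotomy for loops based at points of the transition region: loops short enough to stay in $N$ are loops of $(X,g)$ and have length $\ge 2\inj(X,g)$; loops short enough to stay in the fixed region (collar of $\partial N$ where $g=g_0$) $\cup\, C$ are loops of the fixed closed manifold $\hat X_0$ and have length bounded below by a constant depending only on $\hat X_0$; and loops meeting both $K$ and the far part of $C$ must cross the collar and are therefore long. Your case analysis for $x\in N\setminus U$ and $x\in C\setminus U$ is of this containment type and is correct, but the based points in $U$ are exactly the delicate case and are not handled.

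A second omission: injectivity radius is the minimum of half the shortest geodesic loop length and the distance to the first conjugate point, and you never address conjugate points at all. The paper does, and in the transition region it invokes the negative curvature of the hyperbolic collar $(X_0-K_0,g_0)$ to rule out conjugate points inside $Z_{2\delta}$ before reducing to $\inj(X,g)$. In your setup the same containment argument repairs this (a short segment based near $\partial N$ lies either in $N\subset X$ or in the fixed region embedded in $\hat X_0$, and in each the conjugate distance is bounded below by the relevant injectivity radius), but it must be stated. Finally, your "transfer" step, asserting that $[\hat X]\mapsto[X]$ is finite-to-one via a claim about isotopy classes of separating hypersurfaces, is asserted rather than proved; the paper passes over the analogous point for its doubles equally tersely, so I flag it only as something your isotopy argument does not actually establish.
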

\begin{proof}
Suppose that we glue two copies of $K_0$ together along a neck $N_0$, diffeomorphic to
$\bX \times [-1,1]$, to form a compact manifold $D_0$, 
with metric $\tilde g_0$ defined as a smooth extension of the $g_0$ 
metric on each copy of $K_0$.  For some $\delta>0$ we may assume that
a region near the edges of $(N_0, \tilde g_0)$, defined by
$$
Z_{2\delta} := \Bigl\{ p \in N_0:\> d(p, \del N_0) \le 2\delta \Bigr\} \subset N_0,
$$ 
is isomorphic to the corresponding region of $(X_0,g_0)$.  

We can use the same neck $(N_0, \tilde g_0)$ to form the corresponding
double $(D, \tilde g)$ for any $(X, g) \in \calM(X_0, g_0, K_0)$.  The volume of this
double is controlled by
\begin{equation}\label{vol.D}
\vol (D, \tilde g) \le 2 \vol(K,g) + \vol(N_0, \tilde g_0),
\end{equation}
which is bounded above by assumption.

As for the injectivity radius, we claim that
\begin{equation}\label{inj.D}
\inj(D, \tilde g) \ge c,
\end{equation}
where $c$ depends only on $\inj(X,g)$, the fixed geometry of $(N_0,\tilde g_0)$, 
and $\delta$.  Consider first a point $p \in D- N_0$.  If a geodesic loop 
originating at $p$ lies entirely within $K \cup Z_\delta$ (using either copy of $K$), 
then its length is bounded
below by $2\inj(X,g)$.  On the other hand, if a point of the geodesic loop 
intersects $N_0 - Z_\delta$, then the length of the loop is greater than $2\delta$.
The same reasoning applies to any segment connecting $p$ to a conjugate point,
so we conclude that $\inj(p)$ satisfies the bound (\ref{inj.D}) in this case.
The argument starting from $p \in N_0 - Z_\delta$ is virtually identical.

This leaves the case of $p \in Z_\delta$.  If geodesic loop originating at $p$
has length shorter than $\delta$, then it lies completely within $K \cup Z_{2\delta}$
and this length is bounded below by $2\inj(X,g)$.  Since $(X_0-K_0, g_0)$ has
negative curvature, there are no conjugate points within $Z_{2\delta}$.
Thus if a segment joining $p$ to a conjugate point is shorter than $\delta$,
it must lie completely within $K \cup Z_{2\delta}$.  The length of this segment
is then bounded below by $\inj(X,g)$.
This completes the proof of (\ref{inj.D}).

Using (\ref{vol.D}) and (\ref{inj.D}), the result now follows from Theorem~\ref{gpw.thm}.
\end{proof}

\bigbreak
It is now straightforward to combine these results with the spectral results 
from the preceding sections.  Note that fixing $(X_0, g_0)$ fixes the $d_k$
contributions to $\Rsc_g$, so it does not matter in the statement of Theorem~\ref{finite.thm} 
whether we specify $\calR_g$ or $\Rsc_g$ for the even dimensional case.
\begin{proof}[Proof of Theorem~\ref{finite.thm}]
In even dimensions, fixing $\calR$ controls $\vol(K,g)$ and the injectivity radius
by Corollary \ref{rel.wtrace.cor}.  The result then follows immediately
from Corollary~\ref{gpw.cor}.

In odd dimensions, extra information is required because the resonance set does not
fix the $0$-volume.  (This would be impossible, because the $0$-volume can
be made arbitrarily large through the choice of $\rho$.)  To control the volume we must
fix the scattering phase and appeal to Proposition~\ref{phase.heat}.
Since the zeroth relative heat invariant is $\vol(K,g) - \vol(K_0, g_0)$,
this fixes $\vol(K,g)$ for metrics in $\calM(X_0, g_0, K_0)$
Because the scattering phase determines $\Rsc_g$
(relative to the fixed background set $\Rsc_{g_0}$), Corollary~\ref{rel.wtrace.cor} gives
control over the injectivity radius.  The result thus follows by Corollary~\ref{gpw.cor}.
\end{proof}

\section{Geometric compactness theorems}\label{cpt.sec}

To prove $\cinf$ compactness of a particular class of metrics, we seek to apply 
the following $\cinf$ version of the Cheeger compactness theorem:
\begin{theorem}[Kasue \cite{Kasue:1989}, Croke \cite{Croke:1980}]\label{kasue.thm}
Let $(M_j, g_j)$ be a sequence of compact Riemannian manifolds with uniform bounds
of the form:
$$
\vol(M_j, g_j) \le C, \qquad \inj(M_j, g_j)  \ge c, \qquad \sup |\nabla^k \text{\rm Ricc}(g_j)| \le C_k.
$$
Then, after passing to a subsequence, there exists a manifold $M_\infty$ with 
diffeomorphisms $\varphi_{j}: M_\infty \to M_{j}$ such that the metrics $\varphi_{j}^*g_{j}$ 
converge in the $\cinf$ topology on $M_\infty$.
\end{theorem}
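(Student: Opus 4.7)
The plan is to combine Cheeger--Gromov precompactness, harmonic coordinates, and elliptic bootstrapping. First I would invoke the classical Cheeger finiteness theorem, which already follows from uniform bounds on volume, injectivity radius, and sectional curvature (the latter coming from the $\sup |\text{Ricc}(g_j)|$ bound together with standard comparison geometry once we note that the hypothesis with $k=0$ controls $|\text{Ricc}|$). Cheeger's theorem produces a subsequence, still denoted $(M_j,g_j)$, and a single smooth manifold $M_\infty$ equipped with diffeomorphisms $\varphi_j\colon M_\infty\to M_j$. The goal is then to show that the pulled-back metrics $\varphi_j^*g_j$ converge in $\cinf$.

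Next I would pass to harmonic coordinates. Given the injectivity radius and Ricci curvature bounds, a result of Jost--Karcher (or Anderson) produces a uniform lower bound on the $C^{1,\alpha}$ harmonic radius: around each point of $M_j$ there is a ball of definite size on which harmonic coordinates exist with uniform $C^{1,\alpha}$ control on the components $g_j{}_{ab}$ and on the coordinate transition maps. Cover $M_\infty$ by finitely many such charts (using the upper volume bound and lower injectivity radius to bound the number of charts needed). After passing to a further subsequence, the coordinate expressions $\varphi_j^* g_j{}_{ab}$ converge in $C^{1,\alpha}$ on each chart, via Arzel\`a--Ascoli.

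The heart of the argument is the elliptic bootstrap. In harmonic coordinates the Ricci tensor takes the schematic form
\begin{equation*}
-\tfrac12 g^{cd}\partial_c\partial_d g_{ab} + Q(g,\partial g) = \text{Ricc}(g)_{ab},
\end{equation*}
so with $C^{1,\alpha}$ control on $g$ and on $\text{Ricc}(g)$, standard interior Schauder estimates promote $g$ to $C^{3,\alpha}$. Assuming inductively that $g_j\to g_\infty$ in $C^{k,\alpha}$, the bound on $|\nabla^{k-1}\text{Ricc}(g_j)|$ feeds into the same elliptic equation, upgraded by $k-1$ derivatives, to give $C^{k+2,\alpha}$ convergence; a diagonal subsequence then converges in $\cinf$.

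The main obstacle is the harmonic-coordinate step: one must verify that the lower bound on the harmonic radius really follows from injectivity radius and $|\text{Ricc}|$ bounds (rather than a stronger sectional curvature bound), and one must patch local harmonic-coordinate convergence into a global $\cinf$ convergence on $M_\infty$ in a way compatible with the diffeomorphisms $\varphi_j$. Once the harmonic radius lower bound is in hand, the subsequent bootstrap is routine, and the diagonal-subsequence argument delivers the claimed $\cinf$ convergence.
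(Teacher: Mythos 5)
Your route is genuinely different from the paper's. The paper does not reprove this convergence theorem at all: it quotes Kasue's $\cinf$ convergence theorem, which assumes an upper bound on \emph{diameter} rather than volume, and then observes that Croke's inequality
$$
\diam(M,g) \le \frac{2m^m \Omega_m}{\Omega_{m-1}} \frac{\vol(M,g)}{\inj(M,g)^{m-1}}
$$
converts the volume upper bound plus injectivity radius lower bound into the required diameter bound. That two-line reduction is the entire content of the paper's justification. Your proposal instead reconstructs the convergence theorem from scratch via harmonic coordinates and elliptic bootstrapping, which is essentially Anderson's proof of $C^{k,\alpha}$ compactness under Ricci bounds; if carried out correctly it is more self-contained, but it is also much longer than what the paper needs.

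As written, though, your first step contains a genuine error: a uniform bound on $|\operatorname{Ricc}|$ does \emph{not} control the sectional curvature in dimension $m\ge 4$ (it does only for $m\le 3$, where Ricci determines the full curvature tensor), so you cannot invoke the classical Cheeger finiteness theorem, nor the Jost--Karcher harmonic coordinate construction, both of which require two-sided sectional curvature bounds. The correct ingredient is Anderson's lower bound on the $C^{1,\alpha}$ harmonic radius under a bound on $|\operatorname{Ricc}|$ and a lower bound on the injectivity radius; this single result simultaneously replaces your appeal to Cheeger (it yields $C^{1,\alpha}$ precompactness, the limit manifold $M_\infty$, and the diffeomorphisms $\varphi_j$) and supplies the harmonic charts for the bootstrap. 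You flag exactly this point as ``the main obstacle'' and leave it unverified, but it is the non-routine heart of your argument, so the proposal as it stands has a gap precisely where the work lies. Two smaller points: you still need a diameter bound (or equivalently a bound on the number of balls of definite size, via a Croke-type packing argument using the volume and injectivity radius bounds) to get a finite atlas and to apply Gromov precompactness, and with $g\in C^{1,\alpha}$ and $\operatorname{Ricc}\in C^{\alpha}$ the Schauder estimate yields $g\in C^{2,\alpha}$, not $C^{3,\alpha}$ --- harmless here since all covariant derivatives of Ricci are assumed bounded, but the induction should be stated accordingly.
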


This is a modification of the compactness theorem of Kasue \cite{Kasue:1989}, which 
assumes a uniform bound on the diameters of $(M_j,g_j)$.   
(The original version is more refined, yielding $C^{k,\alpha}$ compactness 
based on control of derivatives of the curvature
up to order $k$.)   
Since the spectral data give control of the volumes of the cores $(K,g)$, 
it is more convenient for us to switch from diameter to volume.    
This link is provided by Croke \cite[Cor.~15]{Croke:1980},
who proves that for any compact $m$-dimensional Riemannian manifold $(M, g)$,
$$
\diam(M,g) \le \frac{2m^m \Omega_m}{\Omega_{m-1}} \frac{\vol(M,g)}{\inj(M,g)^{m-1}},
$$
with $\Omega_m$ the volume of $S^m$.

It is tempting to try to generalize Theorem~\ref{kasue.thm} to the case of
even-dimensional asymptotically hyperbolic manifolds, by replacing the volume estimate 
with a bound on the $0$-volume.  (There's no hope of this in odd dimensions because
the $0$-volume is not invariantly defined.)  But at least for surfaces we can see immediately
that this does not work.  Consider a pair of pants with boundary geodesics of length 
$\ell_1, \ell_2, \ell_3$
and funnels attached to each of these.  As $\ell_1 \to \infty$ the sequence clearly diverges,
but curvature is constant, injectivity radius remains equal to $\min(\ell_2,\ell_3)$, 
and the $0$-volume is also constant at $2\pi$.  
The obvious doubling argument that one might try to
extend Theorem~\ref{kasue.thm} fails here because the injectivity radius 
of the doubled surface may approach zero.

\subsection{Isoresonant compactness in dimension two}

The two dimensional application of Theorem~\ref{kasue.thm} is based
on the following intermediate result:
\begin{proposition}\label{Kbounds}
Suppose $(X,g)$ is a conformally compact surface hyperbolic near infinity, with 
$K(g)$ denoting the Gaussian curvature.
We have bounds
$$
\inj(X,g) \ge c,\qquad \sup|\nabla_g^k K(g)| \le C_k,
$$
for any $k = 0,1,2,\dots$, where the constants $c>0$ and $C_k>0$ depend only
on the resonant set $\calR_g$.
\end{proposition}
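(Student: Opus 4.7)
The plan is to use conformal uniformization to reduce all $C^k$ control to controlling a conformal factor. Pick a hyperbolic metric $h$ in the conformal class of $g$ whose funnel geometry matches that of $g$, so that $g = e^{2\phi} h$ with $\phi$ vanishing to order $\rho^2$ on the ends. This places us in the setting of Proposition~\ref{res.det}.

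Applying Proposition~\ref{res.det} with $\dim X = 2$, the set $\calR_g$ determines $\chi(X)$, the product $\Drel(s) Z_h(s)$, and every relative heat invariant $b_j$ of the expansion (\ref{gh.heat}). A Polyakov-type computation expresses each $b_j$ as an integral over $X$ of a polynomial in $\phi$ and its $h$-covariant derivatives, whose top-order term is a positive multiple of $\int_X |\nabla_h^j \phi|^2 \, dh$ plus strictly lower-order contributions. Following the inductive bootstrap of Osgood--Phillips--Sarnak \cite{OPS:1988} and Borthwick--Judge--Perry \cite{BJP:2003}, I would use $b_1,\dots,b_k$ in turn to bound $\bnorm{\phi}_{H^k(h)}$ and then invoke Sobolev embedding for a bound on $\bnorm{\phi}_{C^k}$. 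Plugging into the transformation law $K(g) = e^{-2\phi}(-1 - \Delta_h \phi)$ and iterating yields $\sup|\nabla_g^k K(g)| \le C_k$.

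For the injectivity radius bound the obstacle is that Proposition~\ref{res.det} only delivers the product $\Drel(s) Z_h(s)$, so the hyperbolic uniformization $h$ is not directly pinned down. The saving grace is the Hadamard factorization (\ref{Drel.factor}), which gives $\Drel(s) Z_h(s) = e^{p(s)} P_g(s)/G_\infty(s)^{\chi(X)}$ with $p$ of degree at most $n+1 = 2$. Since $P_g$ and $\chi(X)$ are determined by $\calR_g$, this identity combined with the $C^0$ bound on $\phi$ traps $h$ in a compact part of its moduli space: a hypothetical isoresonant sequence $g_j$ with $\inj(X_j, g_j) \to 0$ would, given uniform $C^k$ control of $\phi_j$, force the hyperbolic metric $h_j$ to pinch, producing zeros of $Z_{h_j}(s)$ accumulating on the critical line and contradicting the fixed shape of the product. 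Once $h$ is trapped one has $\inj(X, h) \ge c_0>0$, and the pointwise bound $\bnorm{\phi}_\infty \le C$ gives $\inj(X, g) \ge e^{-\bnorm{\phi}_\infty} \inj(X, h) \ge c$. The disentangling step — extracting length-spectrum control of $h$ from $\Drel(s) Z_h(s)$ modulo a polynomial of degree $\le 2$ — is the main obstacle of the proof; by contrast, the Polyakov--OPS bootstrap for the curvature derivatives is routine once the relative heat invariants are in hand.
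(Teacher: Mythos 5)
There are genuine gaps in both halves of your argument. For the curvature bounds, you call the Polyakov--OPS bootstrap ``routine once the relative heat invariants are in hand,'' but this skips the central difficulty of the non-compact setting: the uniformizing hyperbolic metric $h$ of Proposition~\ref{MT.thm} varies with $g$ and is not pinned down by anything a priori, so the Sobolev embedding, Poincar\'e inequality, and Trudinger/resolvent estimates for $(X,h)$ that your bootstrap implicitly needs have no uniform constants. Moreover the heat invariants alone cannot even start the induction: $b_1=0$ by Gauss--Bonnet, and $b_0=\frac{1}{4\pi}\int(e^{2\varphi}-1)\,dh$ gives no $H^1$ control of $\varphi$; nor is it true that $b_j$ has top term a positive multiple of $\int|\nabla_h^j\varphi|^2$ (the top terms involve derivatives of the curvature, weighted by $e^{\pm 2\varphi}$). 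The paper's route is: combine $b_0$ (via Jensen) with the determinant invariant $d_0=\log \Drel(1)Z_h(1)$ and the Polyakov formula, using $Z_h(1)\in(0,1)$ so that $\log\Drel(1)\ge d_0$, to get $\int\varphi$ and $\int|\nabla_h\varphi|^2$ bounds; the same computation yields $-\log Z_h(1)\le b_0/3-d_0$, hence a lower bound on $\ell_0(h)$ and, by Dodziuk et al., on $\inf\sigma(\Delta_h)$; only then do Davies--Mandouvalos, Grigor'yan, and Bakry et al.\ give $h$-uniform heat kernel, Trudinger, and $L^p$ resolvent bounds with constants depending only on $\calR_g$, which is what lets the bootstrap through $b_2,b_3,\dots$ run. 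Your proposed order (bootstrap first, control $h$ afterwards) is circular, since the uniform functional inequalities for $h$ are prerequisites for the bootstrap.

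The injectivity radius step contains the more serious error. The product $\Drel(s)Z_h(s)=e^{p(s)}P_g(s)G_\infty(s)^{-\chi(X)}$ carries no information whatsoever about the zeros of $Z_h$: in the factorization $\Drel(s)=e^{p_2(s)}P_g(s)/P_{h}(s)$ the poles of $\Drel$ exactly cancel the zeros of $Z_h$, which is precisely the ``amusing feature'' noted after Proposition~\ref{res.det}. So a pinching sequence $h_j$ produces no contradiction with ``the fixed shape of the product'' --- the zero/pole set of that product is identical for every $h$, and your trapping argument collapses. The correct mechanism is quantitative, not a compactness contradiction: the numerical value $Z_h(1)$ is bounded below in terms of $d_0$ and $b_0$, and the Euler product gives $Z_h(1)\le 1-e^{-\ell_0(h)}$, which forces $\ell_0(h)\succeq 1$; the sup bound on $\varphi$ then gives $\ell_0(g)\succeq 1$, and conjugate points are handled separately using the $C^0$ curvature bound through $\inj(X,g)\ge\min\bigl(\pi/\sqrt{\kappa},\,\ell_0(g)/2\bigr)$. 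Relatedly, your inequality $\inj(X,g)\ge e^{-\bnorm{\varphi}_\infty}\inj(X,h)$ is not valid as stated: a bounded conformal factor controls lengths of loops but not the distance to conjugate points, so the curvature bound is genuinely needed at that point.
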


We will defer the somewhat technical proof of Proposition~\ref{Kbounds}
to \S\ref{2d.sec}.

\bigbreak
\begin{proof}[Proof of Theorem \ref{2d.cpt.thm}]
Let $\calA$ denote a collection of surfaces as described in the statement of the theorem. 
By Proposition~\ref{res.det}, $\ovol(X,g)$ is constant over $\calA$.  Hence
$\vol(K, g)$ is constant as well.  If we form the doubles $(D, \tilde g)$, by gluing
two copies of each compact regions $(K,g)$ along a common neck $N$, then 
we produce a corresponding class $\tilde\calA$ of compact surfaces 
$(D, \tilde g)$.  These metrics share a fixed volume and the $C^k$ curvature bounds
from Proposition~\ref{Kbounds} extend directly because the same neck is used for every
case.  As in the proof of Corollary~\ref{gpw.cor}, the injectivity radius is bounded
below in terms of the lower bound on $\inj(X,g)$ from Proposition~\ref{Kbounds},
the width of the neck, and the curvature in the neck.  

Starting from a sequence $\{(X, g_k)\} \subset \calA$, we form doubles $(D, \tilde g_k)$.
By Theorem~\ref{kasue.thm} we can assume, after passing to a subsequence,  that there exist
diffeomorphisms $\varphi_k: D \to D$ such that $\{\varphi_k^*\tilde g_k\}$ converges in the $\cinf$
topology on $D$ to some metric $\tilde g_\infty$.  In order to apply this result to the original sequence, 
we need to make sure that the diffeomorphisms $\varphi_k$ converge to the identity on the neck.

Let $d_N$ denote the distance function corresponding to the metric on the neck,
which is fixed independently of $k$.
Suppose $p_1,..., p_n$ are points in $N$, chosen so that the distance functions 
$d_N(p_i, \cdot)$ collectively provide good sets of coordinates covering all of $N$.  Since $D$ is
compact, by passing to a subsequence of $\{ \varphi_k \}$ we can assume
that $\varphi_k^{-1}(p_i)$ converges to some point $p_{i,\infty} \in D$
as $k \to \infty$, for each $i=1, ..., n$.  For $q\in N$ we have
$$
d_{\varphi_k^*\tilde g_k}(\varphi_k^{-1}(p_i), \varphi_k^{-1}(q)) = d_N(p_i, q).
$$
Because the metrics $\varphi_k^*\tilde g_k \to \tilde g_\infty$ and 
$\varphi_k^{-1}(p_i) \to p_{i,\infty}$, this implies that 
\begin{equation}\label{dtilde.conv}
\lim_{k\to\infty} d_{\tilde g_\infty}(p_{i,\infty}, \varphi_k^{-1}(q)) = d_N(p_i, q).
\end{equation}
Since all of the neck metrics are isometric, the functions $d_{\tilde g_\infty}(p_{i,\infty}, \cdot)$
also provide good sets of coordinates, we conclude from (\ref{dtilde.conv}) that 
$\varphi_k^{-1}(q)$ converges to some point $q_\infty$ such that
$$
d_{\tilde g_\infty}(p_{i,\infty}, q_\infty) = d_N(p_i, q).
$$
This argument shows that the restriction of $\varphi_k^{-1}$ to $N$ converges to a map 
$\psi:N \to N$ which is just 
identity map between the respective coordinate systems defined 
by $\{d_N(p_i, \cdot)\}$ and $\{d_N(p_{i,\infty}, \cdot)\}$.

We can extend $\psi$ to a diffeomorphism $D \to D$ in some arbitrary way and,
after replacing $\varphi_k$ by $\psi\circ\varphi_k$, we can assume that
$\varphi_k$ converges to the identity on $N$.  Then we obtain a solution to the
original problem by restricting the resulting sequence to $K$.
\end{proof}

\bigbreak
\subsection{Isophasal compactness in dimension three}

Our compactness argument is actually somewhat easier for $\dim X = 3$, because 
the extra hypothesis of negative curvature gives us control over the injectivity radius
immediately from Corollary~\ref{rel.wtrace.cor}.  Since $\vol(K,g)$ is fixed by the first relative
heat invariant, the doubling trick is essentially all that we need to adapt standard arguments
from the compact case.

The one point to clear up is that we can produce bounds on the Sobolev constants
of the compact doubles $(D, \tilde g)$, using spectral information from the 
original spaces $(X,g)$.
The results of Brooks-Perry-Petersen \cite[\S2]{BPP:1992} do not apply verbatim,
because they assume knowledge of the eigenvalue spectrum of $(D, \tilde g)$.
Adapting these arguments to our case is a relatively simple matter; we include the
details for the sake of clarity of exposition.

\begin{theorem}\label{sob.thm}
Let $(M, g)$ be a compact $m$-dimensional Riemannian manifold, and assume
$$
\vol(M, g) \le C, \qquad \inj(M, g)  \ge c.
$$
Then for each $p$ the constant $C_p$ in the Sobolev inequalities: for $f\in\cinf(M)$
$$
\norm{f}_{\frac{pm}{m-p}} \le C_p \bigl(\norm{f}_p + \norm{\nabla f}_p \bigr)\qquad 1 \le p < m,
$$
and
$$
\norm{f}_\infty  \le C_p \bigl(\norm{f}_p + \norm{\nabla f}_p \bigr)\qquad p>m,
$$
is bounded above by a constant that depends only on $p$, $c$, and $C$.
\end{theorem}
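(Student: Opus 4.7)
The plan is to reduce the theorem to a single uniform isoperimetric inequality and then apply classical bootstrap machinery to reach the full range of Sobolev exponents.

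The first step is Croke's volume comparison \cite{Croke:1980}: the lower bound $\inj(M,g)\ge c$ yields
$$\vol(B_r(x))\ge \alpha_m r^m \qquad \text{for all } x\in M,\ r\le c/2,$$
with $\alpha_m$ a universal dimensional constant. Combined with the upper bound $\vol(M,g)\le C$, this small-scale volume estimate produces, by a covering-and-patching argument, a global isoperimetric inequality
$$\vol_{m-1}(\partial\Omega)\ \ge\ \beta\,\min\bigl(\vol(\Omega),\ \vol(M,g)-\vol(\Omega)\bigr)^{(m-1)/m}$$
for every smooth domain $\Omega\subset M$, where $\beta>0$ depends only on $m$, $c$, and $C$. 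The Federer--Fleming coarea formula then converts this inequality into the mean-zero $L^1$-Sobolev inequality $\norm{f-\bar f}_{m/(m-1)}\le \beta^{-1}\norm{\nabla f}_1$; adding back the mean value via $|\bar f|\le \vol(M,g)^{-1}\norm{f}_1$ together with the Croke lower bound $\vol(M,g)\ge \alpha_m c^m$ (applied to any single ball of radius $c$) produces the theorem at $p=1$.

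For $1<p<m$ we run the standard bootstrap. Applying the $p=1$ inequality to $u=|f|^{p(m-1)/(m-p)}$ and estimating $|\nabla u|$ through H\"older's inequality produces, after rearrangement, the claimed bound with $C_p$ depending only on $p$, $m$, and $\beta$. For $p>m$ we iterate the subcritical embeddings finitely many times to land in a H\"older space and hence in $L^\infty(M)$; alternatively, a direct Morrey-type estimate integrating $\nabla f$ along minimizing geodesics inside the balls $B_{c/2}(x)$ supplied by the first step gives the same bound with constants depending only on $c$, $C$, $p$.

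The principal obstacle is the isoperimetric step, since without any pointwise curvature hypothesis one cannot compare local coordinate charts to Euclidean ones in a uniform $C^1$ sense, so the naive strategy of piecing together Euclidean isoperimetric inequalities fails. The observation that resolves this is that Croke's original proof of the isoperimetric inequality does not rely on such comparisons: it proceeds via integral-geometric identities on geodesic spheres, using only the lower bound $\inj\ge c$ together with the upper bound on total volume. Invoking that result directly furnishes the constant $\beta$, and the remaining Sobolev bootstrap is then entirely routine.
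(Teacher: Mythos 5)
Your overall strategy (Croke's integral-geometric estimates, then Federer--Fleming, then bootstrap) is in the right family, but the central step is not justified and, as formulated, is not what Croke proves. Croke's isoperimetric inequality is a \emph{local} statement: the constant involves the visibility angle of the domain, which is bounded below only for domains in which geodesics minimize until they exit --- e.g.\ domains contained in a ball of radius $\tfrac12\inj(M,g)$. It does not give, and cannot give, a bound of the Cheeger type
$\vol_{m-1}(\del\Omega)\ge\beta\,\min\bigl(\vol(\Omega),\vol(M)-\vol(\Omega)\bigr)^{(m-1)/m}$
for \emph{arbitrary} domains $\Omega\subset M$: that inequality (equivalently, the mean-zero $L^1$ Sobolev inequality you derive from it) is a genuinely global statement about how the manifold is connected, and it cannot be produced by ``covering-and-patching'' local isoperimetric inequalities alone. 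To patch you would need a uniform \emph{relative} (Neumann-type) isoperimetric or Poincar\'e inequality on metric balls plus a chaining argument over the cover; with no curvature hypothesis whatsoever, such a uniform relative inequality on balls is exactly the missing (and nontrivial) ingredient, and neither your covering sketch nor your closing appeal to ``Croke's original proof'' supplies it. So the principal obstacle you identify is not resolved by the argument you give. (Your $p>m$ case is also glossed: iterating the subcritical embedding does not reach $L^\infty$, and the Morrey-type argument along radial geodesics requires Jacobian control of the exponential map that is unavailable without curvature bounds.)

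The gap is also avoidable, and this is how the paper proceeds: because the right-hand side of the claimed inequalities contains $\norm{f}_p$ as well as $\norm{\nabla f}_p$, you never need a global isoperimetric inequality. Croke's Theorem~14 and Proposition~15 give a uniform isoperimetric ratio and volume lower bound for balls of radius $r\le\tfrac12\inj(M,g)$; this yields the Sobolev inequalities for $f\in\cinf_0(B(p;r))$ with controlled constants (this is \cite[Cor.~2.1]{BPP:1992}). The ball-volume lower bound together with $\vol(M,g)\le C$ bounds the number of balls of radius $r/2$ in a maximal packing, hence gives a cover $\{B(p_j,r)\}$ of bounded cardinality; a partition of unity subordinate to this cover then extends the inequality to all $f\in\cinf(M)$, the cutoff-gradient errors being absorbed precisely by the $\norm{f}_p$ term. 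Reworking your write-up along these lines (or supplying an actual proof of the uniform relative inequality on balls) is necessary before the argument can be considered complete.
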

\begin{proof}
By \cite[Thm.~14]{Croke:1980}, for any $r \le \tfrac12 \inj(M,g)$ we have
\begin{equation}\label{isoper}
\frac{\vol(\del B(p; r))^m}{\vol(B(r))^{m-1}} \ge \frac{2^{m-1} \Omega_{m-1}^m}{\Omega_{m}^{m-1}},
\end{equation}
where $\Omega_m$ is the volume of $S^m$.
Moreover, this bound can be integrated \cite[Prop.~15]{Croke:1980}, yielding,
for any $r \le \tfrac12 \inj(M,g)$,
\begin{equation}\label{volbound}
\vol(B(p;r)) \ge  \frac{2^{m-1} \Omega_{m-1}^m}{m^m \Omega_{m}^{m-1}}\> r^m.
\end{equation}
Fix $r = \tfrac12 \inj(M,g)$. 
If we pack $M$ with a maximal collection of disjoint balls $B(p_j, r/2)$,
$j=1, \dots, k$, then (\ref{volbound}) gives a bound on the number $k$ of such balls:
\begin{equation}\label{kbound}
k \le \frac{2m^m \Omega_{m}^{m-1}}{\Omega_{m-1}^m} \frac{\vol(M,g)}{r^m}.
\end{equation}

For $f \in \cinf_0(B(p;r))$, we can now apply \cite[Cor.~2.1]{BPP:1992}, which gives the 
claimed Sobolev bounds in this case with constants controlled by virtue of
(\ref{isoper}) and (\ref{volbound}).   A simple partition of unity argument (see
\cite[pp.~78--9]{BPP:1992}) applied to the cover $\{B(p_j, r)\}_{j=1}^k$, together
with the bound (\ref{kbound}), then extends the result to $f\in \cinf(M)$.
\end{proof}

\bigbreak
\begin{proof}[Proof of Theorem~\ref{3d.cpt.thm}]
Let $\calA \subset \calM(X_0, K_0, g_0)$ be a collection as in the statement of the theorem.
According to Proposition~\ref{phase.heat}, fixing the relative scattering
phase fixes the relative heat invariants.  Since the background metric
is held constant, this in turn fixes the integrals
\begin{equation}\label{heatK}
a_{j,K}(g) := \int_K \alpha_j(g)\>dg,
\end{equation}
where $\alpha_j(g)$ is the $j$-th local heat invariant of $\Delta_g$, as in 
(\ref{loc.heat}).  In particular, the $j=0$ case shows that
$\vol(K,g)$ is fixed for $(X,g) \in \calA$.
By the assumption of negative curvature, the injectivity radius of $(X,g)$ is fixed 
by Corollary~\ref{rel.wtrace.cor}.

Now we form the collection $\tilde\calA$ of doubles $(D,\tilde g)$ as in the proof
of Corollary~\ref{gpw.cor}.  The volume and injectivity radius of 
any $(D,\tilde g)\in \tilde\calA$ are controlled just as in that proof.   
Theorem~\ref{sob.thm} therefore gives uniform 
control of the Sobolev constants of $(D, \tilde g)$.  And using the constants $a_{j,K}(g)$,
together with the corresponding integrals over the fixed neck, we see that the heat invariants
of $(D, \tilde g)$ are fixed for the collection $\tilde\calA$.  

The final step is to apply the bootstrap argument to produce $C^k$ bounds on the
Ricci tensor from the heat invariants, using the Sobolev inequalities.  
For compact manifolds of dimension three 
this was done in Brooks--Petersen--Perry \cite[\S5]{BPP:1992}, and we will not
repeat the details here.  (See also 
the nice expository account of this argument
in Brooks \cite{Brooks:1997}.)
\end{proof}

\bigbreak
\section{Curvature estimates in dimension two}\label{2d.sec}

The main issue in two dimensions is to control the injectivity radius without
assuming the curvature is negative.  The tool for accomplishing this is conformal 
uniformization, which was also the basis for the results of Osgood-Phillips-Sarnak \cite{OPS:1988}
as well as Borthwick-Judge-Perry \cite{BJP:2003}.  

For conformally compact manifolds, the relevant uniformization theorem follows
from the work of Mazzeo-Taylor \cite{MT:2002}.  There results show in particular that any metric
$\bar g$ on $\bar X$ is conformally related to a unique complete hyperbolic metric,
with control of the boundary regularity of the conformal factor.
By \cite[Cor.~4.2]{BJP:2003}, we can assume an extra order of vanishing of the conformal
factor when $K(g) = -1 + O(\rho^2)$.  In particular we have the following corollary to 
the Mazzeo-Taylor result:
\begin{proposition}\label{MT.thm}
If $(X,g)$ is a conformally compact surface hyperbolic near infinity, then there exists
a unique $\varphi \in \rho^2 \cinf(\bar X)$ such that 
$$
g = e^{2\varphi}h,
$$ 
where $h$ is a complete hyperbolic metric on $X$.
\end{proposition}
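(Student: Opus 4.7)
The proof is essentially a combination of two existing results. The plan is to apply the conformal uniformization theorem of Mazzeo-Taylor to produce a conformal factor of first-order boundary regularity, then invoke Corollary 4.2 of Borthwick-Judge-Perry to upgrade the vanishing from $\rho$ to $\rho^2$ using the hypothesis that $(X,g)$ is hyperbolic (not merely asymptotically hyperbolic) near infinity.

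First, I would invoke the main result of Mazzeo-Taylor \cite{MT:2002}, which produces, for any conformally compact surface $(X,g)$, a unique factorization $g = e^{2\varphi_0} h$ with $h$ a complete hyperbolic metric on $X$ and $\varphi_0 \in \rho \cinf(\barX)$. Existence, uniqueness of both $h$ and $\varphi_0$, and first-order boundary regularity are all part of their statement, so the only remaining task is to upgrade $\varphi_0$ from $\rho \cinf(\barX)$ to $\rho^2 \cinf(\barX)$.

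For the upgrade, I would use the two-dimensional Gauss curvature transformation law, which in the positive-Laplacian convention reads
$$K(g)\, e^{2\varphi_0} = K(h) + \Delta_h \varphi_0 = -1 + \Delta_h \varphi_0,$$
giving the semilinear equation $\Delta_h \varphi_0 = 1 + K(g) e^{2\varphi_0}$. Since $g$ is hyperbolic outside a compact set, $K(g) + 1 \equiv 0$ there and in particular $K(g) = -1 + O(\rho^2)$ as $\rho \to 0$. Corollary 4.2 of \cite{BJP:2003} is tailored precisely to this situation: it asserts that any solution of this equation lying in $\rho \cinf(\barX)$ must in fact lie in $\rho^2 \cinf(\barX)$. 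The mechanism is an indicial-root analysis for the linearization $(\Delta_h + 2) \varphi_0 = -(e^{2\varphi_0} - 1 - 2\varphi_0) + O(\rho^2)$; in dimension two one computes $\Delta_h \rho^s = s(1-s) \rho^s + O(\rho^{s+1})$, so the positive indicial root of $\Delta_h + 2$ is $s = 2$, and Taylor expansion of the nonlinearity makes the effective source $O(\rho^2)$ once $\varphi_0 \in \rho \cinf(\barX)$ is inserted, thereby bootstrapping one order of decay. The main subtlety, handled in \cite{BJP:2003} and not repeated here, is ruling out a logarithmic obstruction of type $\rho^2 \log \rho$ at the intermediate indicial weight, so that the conclusion is clean $\rho^2 \cinf(\barX)$ smoothness. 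Setting $\varphi := \varphi_0$ completes the proof, with uniqueness inherited from Mazzeo-Taylor.
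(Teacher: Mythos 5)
Your proposal takes essentially the same route as the paper, which obtains the proposition precisely by combining the Mazzeo--Taylor uniformization theorem \cite{MT:2002} with \cite[Cor.~4.2]{BJP:2003} to gain the extra order of vanishing of the conformal factor once $K(g)=-1+O(\rho^2)$, as holds here since $g$ is hyperbolic outside a compact set. The curvature-equation and indicial-root details you sketch are consistent with the mechanism behind that corollary, so the argument is correct.
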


The compactness arguments cited above \cite{BJP:2003, OPS:1988} rely
on the production of a convergent subsequence of uniformizing hyperbolic metrics, which 
allows reduction to the case of a single fixed background metric $h$. In the non-compact 
case  \cite{BJP:2003} this approach requires unfortunate extra restrictions: compact support
for the $\varphi$ and upper bounds on the diameters of funnels for the $h$.

The argument presented in this section differs from the previous approaches
(including Osgood-Phillips-Sarnak) in that the
background metric $h$ is never fixed.  Instead, we rely on uniform control of the resolvent
$R_h(s)$ to turn $H^k(X, dh)$ bounds on $\varphi$ into $C^k$ bounds on $K(g)$.
We can then exploit the fact that $K(g)+1$ is compactly supported and avoid any
restriction on the support of $\varphi$.

It is quite possible that the approach presented here could be extended to surfaces with cusps.
The conformal uniformization results one would need to use have recently been proved
by Ji-Mazzeo-Sesum \cite{JMS:2007} (for finite volume only) and
Albin-Aldana-Rochon \cite{AAR:2009} (for the general case).

Suppose we take $(X,g), h, \varphi$ as in Proposition~\ref{MT.thm} and
apply Proposition~\ref{res.det} to the pair $g,h$.  This shows that
$\chi(X)$ and the relative heat invariants $b_j$, defined in (\ref{gh.coeff}),
are determined by $\calR_g$.  The zeroth relative heat invariant is
\begin{equation}\label{b0.gh}
b_0 =  \frac{1}{4\pi} \int_X (e^{2\varphi} - 1)\>dh.
\end{equation}
Proposition~\ref{res.det} also tells us that the product $\Drel(s)Z_h(s)$ is an invariant 
of $\calR_g$.  In particular, the invariant quantity
$$
d_0 := \log \Drel(1)Z_h(1),
$$
will play an important role here.
This is because of the Polyakov formula \cite{Polyakov:1981, Alvarez:1983}, 
which was extended to the asymptotically hyperbolic context in 
\cite[Prop.~1.2]{BJP:2003}:
\begin{equation}\label{polyakov}
\log \Drel(1) = - \frac{1}{6\pi} \int_X \bigl(\tfrac12 |\nabla_h \varphi|^2 - \varphi \bigr)\>dh.
\end{equation}
We should note that, in contrast to the compact case  \cite{OPS:1988},
$\log \Drel(1)$ is not an invariant of $\calR_g$.  Fortunately, the quantity
$d_0$ makes a suitable replacement.

For this section it will be convenient to use the notation 
$$
A \preceq B \quad\Longleftrightarrow \quad A \le CB,
$$ 
where $C>0$ depends only on the invariants
of $\calR_g$, namely $d_0$ and $b_0, b_1, \dots$.  For example,
we claim that
$$
\log \Drel(1) \succeq 1.
$$
To prove this, we note that the product formula for the Selberg zeta function,
\begin{equation}\label{Zdef}
Z_h(1) := \prod_{\ell \in \calL_h} \prod_{k=1}^{\infty}\left[  1- e^{-k\ell(\gamma)}\right],
\end{equation}
where $\calL_h$ denotes the primitive length spectrum of $(X,h)$,
converges in some neighborhood of $1$. 
(The hyperbolic surface $(X,h)$ has infinite area, so
the exponent of convergence for the associated Fuchsian group is strictly less than $1$.)  
For $(X,h) \cong \bbH^2$ we set $Z_h(s) := 1$.  In all other cases,
the convergence of (\ref{Zdef}) implies that $Z_h(1) \in (0,1)$. 
Hence we have a lower bound
for $\log \Drel(1)$ that depends only on $d_0$.

\begin{lemma}\label{phi.bounds}
For $g,h$ as given by Proposition~\ref{MT.thm}, we have bounds 
\begin{equation}\label{3bounds}
\left|\int_X \varphi\>dh \right| \preceq 1, \qquad \int_X |\nabla_h \varphi|^2\>dh \preceq 1,
\qquad \int_X |\varphi|^2\>dh \preceq 1,
\end{equation}
along with
\begin{equation}\label{Xh.bounds}
\inj(X,h) \succeq 1,\qquad \inf \sigma(\Delta_h) \succeq 1.
\end{equation}
The constants in these bounds depend only on the invariants $b_0$ and $d_0$.  
\end{lemma}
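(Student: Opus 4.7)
The plan combines Proposition~\ref{res.det}, which makes both $b_0$ and $d_0 = \log[\Drel(1)Z_h(1)]$ depend only on $\calR_g$, with the Polyakov formula (\ref{polyakov}) and two elementary facts: $e^{2\varphi} \ge 1 + 2\varphi$ pointwise, and $Z_h(1) \in (0,1]$ for any infinite-area hyperbolic surface, so $\log Z_h(1) \le 0$. From these I will extract the integral bounds on $\varphi$, then the injectivity-radius bound, and finally address the spectral gap, which I expect to be the main obstacle.

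Integrating the convexity inequality against $dh$ gives $4\pi b_0 \ge 2 \int_X \varphi \, dh$, hence $\int_X \varphi\,dh \le 2\pi b_0$. Rewriting (\ref{polyakov}) as
$$\int_X \varphi \, dh - \tfrac12 \int_X |\nabla_h\varphi|^2 \, dh = 6\pi\log\Drel(1),$$
and dropping the nonnegative gradient term yields $\log\Drel(1) \le b_0/3$; combining this with $\log Z_h(1) \le 0$ and $d_0 = \log\Drel(1)+\log Z_h(1)$ gives $\log\Drel(1) \ge d_0$ and $\log Z_h(1) \ge d_0-b_0/3$. Feeding $\log\Drel(1)\ge d_0$ back into the identity produces $\int_X\varphi\,dh \ge 6\pi d_0$, and solving for the gradient term yields $\int_X |\nabla_h\varphi|^2\,dh \le 4\pi b_0 - 12\pi d_0$. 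This disposes of the first two bounds in (\ref{3bounds}).

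For the injectivity radius, the Selberg product (\ref{Zdef}) is bounded above by its factor for the shortest closed geodesic, $Z_h(1) \le 1 - e^{-\ell_0}$. Combined with the lower bound $Z_h(1) \ge \exp(d_0 - b_0/3)$ established above, this forces $\ell_0 \ge -\log\bigl(1 - \exp(d_0-b_0/3)\bigr) \succeq 1$. Since $(X,h)$ is geometrically finite and cusp-free, the Margulis lemma identifies thin parts with collars around short closed geodesics, so $\inj(X,h) \succeq \tfrac12\ell_0 \succeq 1$.

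The hard part will be the spectral gap $\inf\sigma(\Delta_h) \succeq 1$. For a convex cocompact hyperbolic surface one has $\inf\sigma(\Delta_h) = \delta(1-\delta)$ when the critical exponent $\delta \ge \tfrac12$, and $\tfrac14$ otherwise; the nontrivial case is to prove $1-\delta \succeq 1$. Because $Z_h$ has a simple zero at $s=\delta$ (Patterson-Sullivan) and no other zeros on $(\delta,\infty)$, the mean-value estimate $Z_h(1) \le (1-\delta)\max_{s\in[\delta,1]}|Z_h'(s)|$ reduces this to an upper bound on $|Z_h'|$ on the segment $[\delta,1]$, which I plan to extract from the injectivity-radius bound and $\chi(X)$ (both invariants of $\calR_g$) controlling the length-counting function underlying the Euler product. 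Once the spectral gap is in hand, the $L^2$ bound on $\varphi$ follows from the Poincar\'e inequality $\lambda_0 \int_X \varphi^2 \, dh \le \int_X |\nabla_h\varphi|^2\,dh$, applied to $\varphi \in \rho^2 \cinf(\bar X)$, which lies in $H^1(X,dh)$ since it decays like $e^{-2r}$ on each funnel.
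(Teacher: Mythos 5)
Your treatment of the first four bounds is essentially the paper's own argument: the pointwise convexity $e^{2\varphi}-1\ge 2\varphi$ (the paper uses Jensen's inequality, but this is an inessential difference) gives $\int_X\varphi\,dh\le 2\pi b_0$; the Polyakov identity (\ref{polyakov}) combined with $\log Z_h(1)\le 0$ gives $\log \Drel(1)\ge d_0$, hence the lower bound on $\int_X\varphi\,dh$, the gradient bound $\int_X|\nabla_h\varphi|^2\,dh\le 4\pi b_0-12\pi d_0$, and the lower bound $Z_h(1)\ge e^{d_0-b_0/3}$; and the single-factor estimate $Z_h(1)\le 1-e^{-\ell_0(h)}$ from (\ref{Zdef}) then forces $\ell_0(h)\succeq 1$, hence $\inj(X,h)\succeq 1$. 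The concluding Poincar\'e step for the third bound in (\ref{3bounds}) is also the paper's.

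The genuine gap is the spectral gap estimate $\inf\sigma(\Delta_h)\succeq 1$, which you explicitly leave as a plan rather than a proof, and whose key step is not sound as stated. The paper gets this in one stroke from Dodziuk--Pignataro--Randol--Sullivan \cite{DPRS:1987}, Thm.~1.1$'$, which bounds the bottom of the spectrum of an infinite-area hyperbolic surface below by a constant (depending only on the topology, which is fixed by $\calR_g$ via Proposition~\ref{res.det}) times $\ell_0(h)$; the injectivity radius bound already obtained then finishes the job. Your alternative via Patterson--Sullivan, $\inf\sigma(\Delta_h)=\delta(1-\delta)$ for $\delta\ge\tfrac12$, reduces correctly to a lower bound on $1-\delta$, and the mean value inequality $Z_h(1)\le(1-\delta)\max_{s\in[\delta,1]}|Z_h'(s)|$ is legitimate since $Z_h(\delta)=0$. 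But the needed uniform upper bound on $|Z_h'|$ over $[\delta,1]$ does not follow from a lower bound on $\ell_0$ and knowledge of $\chi(X)$ ``controlling the length-counting function'': the Euler product and its logarithmic derivative converge only for $\re s>\delta$ and the resulting sums blow up as $s\downarrow\delta$, so the product representation cannot bound $Z_h'$ on the closed segment down to $\delta$. Controlling $Z_h'$ near $s=\delta$ uniformly over the class would require additional machinery (a trace formula, transfer-operator bounds, or a Hadamard factorization over $\calR_h$, which you do not know) -- in effect, input comparable in depth to the DPRS result itself. Since your $L^2$ bound on $\varphi$ is derived from the spectral gap via Poincar\'e, that bound is left unproven as well; to repair the argument, replace the sketched zeta-function step by the citation of \cite{DPRS:1987} as in the paper.
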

\begin{proof}
The first two bounds were obtained in the proof of \cite[Thm.~1.4]{BJP:2003}, but we
recall the details for the convenience of the reader.  
For $\vep >0$ set $X_\vep := \{\rho \ge 0\} \subset X$ and
$$
V_\vep := \vol(\{X_\vep, h)
$$
Applying Jensen's inequality with the convex function $F(x) = e^{2x}-1$ and the
probability measure $V_\vep^{-1} \>dh$ on $X_\vep$ gives
\[
\begin{split}
 \int_{X_\vep} \varphi\>dh 
& \le \frac{V_\vep}2 \log \left[1+ V_\vep^{-1} \int_{X_\vep} (e^{2\varphi}-1)\>dh \right] \\
& \le \frac12  \int_{X_\vep} (e^{2\varphi}-1)\>dh,
\end{split}
\]
where in the second line we just use $\log (1+x) \le x$.
Taking $\vep \to 0$ and comparing to (\ref{b0.gh}) gives
\begin{equation}\label{jenson.b0}
\int_X \varphi\>dh \le 2\pi b_0.
\end{equation}
From (\ref{polyakov}) and the fact that $\log \Drel(1) \ge d_0$ we then deduce
\begin{equation}\label{poly.phi}
\begin{split}
\frac{1}{6\pi} \int_X \varphi \>dh & = \frac{1}{12\pi} \int_X |\nabla_h \varphi|^2\>dh +
\log \Drel(1) \\
& \ge d_0.
\end{split}
\end{equation}
Together, (\ref{jenson.b0}) and (\ref{poly.phi}) give the first bound in (\ref{3bounds}).

We can now use the first bound to eliminate the $\varphi$ term from the 
Polyakov formula (\ref{polyakov}).  This yields the second bound, in the form
$$
\int_X |\nabla_h \varphi|^2 \>dh \le  4\pi b_0 - 12\pi d_0,
$$
as well as the useful estimate
\begin{equation}\label{logz.bd}
-\log Z_h(1) \le \frac{b_0}{3} - d_0.
\end{equation}
If $\ell_0(h) := \inf \calL_h$ then by (\ref{Zdef}) we have
$$
Z_h(1) \le 1 - e^{-\ell_0(h)},
$$
and so (\ref{logz.bd}) gives a lower bound 
\begin{equation}\label{inj.bound}
\inj(X,h) = \frac{\ell_0(h)}2  \succeq 1.  
\end{equation}

For the remainder of the argument, we apply a result of Dodziuk et al.~\cite[Thm.~1.1$'$]{DPRS:1987}, 
which allows one to estimate small eigenvalues of an infinite-area 
hyperbolic surface in terms of lengths of chains of disjoint simple closed geodesics.  In its simplest form,
this result implies
\begin{equation}\label{dprs}
\inf \sigma(\Delta_h) \succeq \ell_0(h),
\end{equation}
where the constant depends only on the topology of $X$.  This gives the second half of (\ref{Xh.bounds}).
The third bound in (\ref{3bounds}) now follows from second bound and the Poincar\'e inequality,
\begin{equation}\label{poinc.ineq}
\int_X |\varphi|^2\>dh  \le \frac{1}{\inf \sigma(\Delta_h)} \int_X |\nabla_h \varphi|^2\>dh.
\end{equation}
\end{proof}

One very useful consequence of the lower bound on the bottom of the spectrum of $\Delta_h$
is that it gives uniform control of the heat-kernel $H_h(t,z,w)$ of $\Delta_h$.  The results of
Davies-Mandouvalos \cite[Thm.~5.4]{DM:1988} yield the following estimate:
\begin{equation}\label{Ht.bound}
H_h(t,z,w) \le C_0 t^{-1} e^{-at} e^{-d(x,w)^2/Dt},
\end{equation}
for any $0< a \le \inf \sigma(\Delta_h)$ and $D >4$.  The constant 
$C_0$ depends only on the choice of $a$ and $D$.
Lemma \ref{phi.bounds} thus shows that (\ref{Ht.bound}) holds with constants that
depend only on $b_0$ and $d_0$.

At this point we've gotten all the information we can out of $b_0$.  And $b_1=0$ because 
$a_1(g) = a_1(h) = -2\pi\chi(X)$.  So the next step is to bring in the second relative heat invariant,
\begin{equation}\label{b2.gh}
b_2 = \frac{1}{60\pi} \int_X \Bigr[ e^{-2\varphi} (\Delta_h\varphi-1)^2 -1 \Bigr]\>dh.
\end{equation}

\begin{lemma}\label{C0.bound}
For $\varphi$ as in Proposition~\ref{MT.thm},
$$
\sup_X |\varphi| \preceq 1,
$$
where the constant depends only on the invariants $b_0$, $b_2$, and $d_0$.
\end{lemma}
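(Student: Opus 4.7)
The plan is to combine the $H^1$ bound on $\varphi$ from Lemma \ref{phi.bounds} with control over the second relative heat invariant $b_2$ and the bounded geometry of $h$, via a Moser-Trudinger / Sobolev bootstrap in the spirit of Osgood-Phillips-Sarnak, adapted to the conformally compact setting.

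First, I would rewrite the integrand in (\ref{b2.gh}) in terms of the Gauss curvature. Since $K(h)=-1$ and the paper uses the positive Laplacian convention, the conformal-change identity reads $K(g)=e^{-2\varphi}(\Delta_h\varphi-1)$, so that $e^{-2\varphi}(\Delta_h\varphi-1)^2 = K(g)^2 e^{2\varphi}$. Because $K(g)\equiv -1$ on the funnel and $\varphi=O(\rho^2)$ there, the funnel contribution to (\ref{b2.gh}) can be matched against the $b_0$ identity (\ref{b0.gh}), yielding
\[
\int_K \bigl[K(g)^2-1\bigr]\,dg = 60\pi b_2 - 4\pi b_0.
\]
Combined with the fact that $\vol(K,g)$ is itself controlled by the invariants (via Corollary \ref{rel.wtrace.cor} applied to $\calR_g$, which determines $b_0,b_2,d_0$), this produces an $L^2(dg)$ bound on $K(g)\vert_K$, equivalently the weighted estimate $\int_K e^{-2\varphi}(\Delta_h\varphi-1)^2\,dh \preceq 1$.

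Second, I would invoke a Moser-Trudinger-type inequality on $(X,h)$, available thanks to the bounded geometry and the spectral gap $\inf\sigma(\Delta_h)\succeq 1$ established in Lemma \ref{phi.bounds}, together with the $\rho^2$-decay of $\varphi$ at infinity. The $H^1$ control of $\varphi$ then yields exponential integrability of the form $\int_X \bigl(e^{q|\varphi|}-1\bigr)\,dh \preceq 1$ for every $q<\infty$. H\"older's inequality applied to the factorization $\Delta_h\varphi-1 = e^{\varphi}\bigl[e^{-\varphi}(\Delta_h\varphi-1)\bigr]$ then converts the weighted $L^2$ bound above, together with the exponential integrability of $e^{\varphi}$, into an unweighted bound $\bnorm{\Delta_h\varphi}_{L^p(K,dh)} \preceq 1$ for some $p>1$.

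Finally, combining this $L^p$ control with the $L^2$ bound on $\varphi$ from Lemma \ref{phi.bounds} and the uniform Sobolev constants coming from the bounded geometry of $h$, I would apply the two-dimensional Sobolev embedding $W^{2,p}\hookrightarrow C^0$ for $p>1$ to conclude $\sup_K|\varphi|\preceq 1$. On the funnel, $\varphi$ satisfies the monotone semilinear elliptic equation $\Delta_h\varphi = e^{2\varphi}-1$ with boundary data $\varphi\vert_{\partial K}$ and $\varphi\to 0$ at infinity, so a maximum principle argument controls $\sup_{\text{funnel}}|\varphi|$ by $\sup_{\partial K}|\varphi|$, giving the global bound. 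The principal obstacle is breaking the circular dependence in the $b_2$-derived estimate: the weight $e^{-2\varphi}$ couples to an a priori unbounded $\varphi$, and the Moser-Trudinger step is exactly what converts the $H^1$ control of $\varphi$ into the exponential $L^p$-integrability of $e^{\pm\varphi}$ required for H\"older to decouple the weight.
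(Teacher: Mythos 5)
Your overall strategy (use $b_2$ for an $L^2$ curvature bound, a Trudinger inequality to decouple the weight $e^{\pm\varphi}$, then elliptic theory to get a sup bound) is in the right spirit, but as written there are concrete gaps. The most serious one is the volume input: your identity $\int_K (K(g)^2-1)\,dg = 60\pi b_2 - 4\pi b_0$ is fine, but to convert it into $\int_K K(g)^2\,dg \preceq 1$ you need $\vol(K,g)\preceq 1$, and you justify this by Corollary~\ref{rel.wtrace.cor}. That corollary does not apply here: it concerns metrics in $\calM(X_0,g_0,K_0)$ compared with a \emph{fixed} background $(X_0,g_0)$, whereas Lemma~\ref{C0.bound} is stated for an arbitrary surface hyperbolic near infinity, with the comparison metric being the (varying) uniformizing hyperbolic metric $h$, which agrees with $g$ only to order $\rho^2$, and with constants allowed to depend only on $b_0,b_2,d_0$. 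A volume bound can in fact be extracted (e.g.\ from $\ovol(X,g)-\ovol(X,h)=4\pi b_0$, renormalized Gauss--Bonnet for $h$, the vanishing of the funnel $0$-volumes, and the fact that Proposition~\ref{res.det} fixes $\chi(X)$), but that argument is missing and is not supplied by the corollary you cite. Secondary gaps: the claim $\int_X(e^{q|\varphi|}-1)\,dh\preceq 1$ overstates what a Trudinger inequality gives — since $\varphi$ is only controlled in $H^1(X,dh)$ on an infinite-volume surface, one can bound $\int_X(e^{q\varphi}-1-q\varphi)\,dh$ (as in (\ref{trud})) but not the linear term, so your H\"older step on $K$ also tacitly needs $\vol(K,h)\preceq 1$, which is never established; and the final step ``$W^{2,p}(K)\hookrightarrow C^0$ plus maximum principle on the funnel'' needs uniform elliptic/Sobolev constants on a varying compact piece $K$ with boundary, together with $L^p$ control of $\Delta_h\varphi=1-e^{2\varphi}$ across $\del K$. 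All of this can be repaired by localizing to $h$-balls of radius comparable to $\inj(X,h)\succeq 1$ (from Lemma~\ref{phi.bounds}), where ball volumes are bounded and interior elliptic estimates are uniform, but those repairs are essential and absent.

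For comparison, the paper's proof avoids the volume question entirely: it expands (\ref{b2.gh}) directly and uses the Trudinger inequality to absorb the cross terms, obtaining the \emph{global} weighted bound $\bnorm{e^{-\varphi}\Delta_h\varphi}_{L^2(X,dh)}\preceq 1$ with no reference to $K$ or $\vol(K,g)$. It then replaces your elliptic/Sobolev endgame by resolvent estimates: pointwise bounds on $R_h(2;z,w)$ derived from the Davies--Mandouvalos heat kernel bound (\ref{Ht.bound}) and the spectral gap give uniform $L^p$ bounds on $R_h(2;z,\cdot)$, and writing $\varphi = R_h(2)(\Delta_h+2)\varphi$ and applying H\"older (again with the Trudinger inequality to handle $e^{\pm\varphi}$) yields $\sup_X|\varphi|\preceq 1$ in one stroke, uniformly over the varying metrics $h$. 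If you want to keep your route, you must add the renormalized Gauss--Bonnet volume argument and carry out the ball-by-ball localization; otherwise the chain of estimates does not close.
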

\begin{proof}
To handle $b_2$, we need a Trudinger-type inequality with suitable control of the constants.
By a theorem of Grigor$'$yan \cite{Grigoryan:1994}, the Davies--Mandouvalos bound (\ref{Ht.bound})
implies the Faber-Krahn inequality:
$$
\lambda_1(\Omega) \succeq \vol(\Omega)^{-1},
$$
for any precompact region $\Omega \subset X$.  This allows us to apply some very general
results on Sobolev inequalities due to Bakry et al.~\cite{BCLS:1995}.  
In particular, by \cite[Thm.~10.1]{BCLS:1995} the Faber-Krahn inequality is equivalent to
a family of bounds:
\begin{equation}\label{Ssr}
\norm{u}_r^r \le (C\norm{\nabla_h u}_2)^{r-s}\> \norm{u}_s^s,
\end{equation}
for any $0<s<r<\infty$, where $\norm{\cdot}_p$ refers to $L^p(X, dh)$.   The constant $C$ depends 
only on the Faber-Krahn constant, which in turn depends only on $b_0$ and $d_0$.
Setting $s=2$ and summing over the cases $r=2,3\dots$ leads immediately to a Trudinger inequality 
\cite[Thm.~3.4]{BCLS:1995}, 
\begin{equation}\label{trud}
\int_X \exp_2 (u)\>dh \le \frac{\norm{u}_2^2}{(C\norm{\nabla_h u}_2)^2} \>
\exp_2 (C\norm{\nabla_h u}_2),
\end{equation}
where $\exp_2(x) := e^x - 1 - x$.

With the Trudinger inequality we can use $b_2$ to control the $L^2$ norm of 
$e^{-\varphi}\Delta_h\varphi$.  
The expansion the of the formula (\ref{b2.gh}) for $b_2$ gives
\begin{equation}\label{ephidelta}
\bnorm{e^{-\varphi}\Delta_h\varphi}_2^2 \le 60\pi b_2 
+  \left| \int_X \bigl(e^{-2\varphi} -1\bigr)\>dh\right| 
+ 2 \left| \int_X e^{-2\varphi} \Delta_h\varphi\>dh \right|.
\end{equation}
Here the the second term on the right-hand side may be controlled using (\ref{trud})
and Lemma~\ref{phi.bounds},
\[
\begin{split}
\left| \int_X \bigl(e^{-2\varphi} -1\bigr)\>dh\right| & \le  \left| \int_X (-2\varphi)\>dh \right| 
+  \int_X \exp_2 (-2\varphi)\>dh \\
& \preceq \left| \int_X \varphi \>dh \right| + \norm{\varphi}_2^2\\
& \preceq 1.
\end{split}
\]
The third term of (\ref{ephidelta}) is handled similarly, starting from
\[
\begin{split}
\left| \int_X e^{-2\varphi} \Delta_h\varphi\>dh \right|
& =  \left| \int_X (e^{-2\varphi}-1) \Delta_h\varphi\>dh \right| \\
& \le \bnorm{e^{\varphi} - e^{-\varphi}}_2\> \bnorm{e^{-\varphi}\Delta_h\varphi}_2.
\end{split}
\]
Since
\[
\begin{split}
\bnorm{e^{\varphi} - e^{-\varphi}}_2^2 & = \int_X \Bigl[e^{2\varphi} - 2 + e^{-2\varphi}\Bigr]\>dh  \\
& = \int_X \Bigl[\exp_2(2\varphi) + \exp_2(-2\varphi)\Bigr]\>dh,
\end{split}
\]
this term can also be bounded by means of (\ref{trud}) and Lemma~\ref{phi.bounds}.   
Thus from (\ref{ephidelta}) we obtain
$$
\bnorm{e^{-\varphi}\Delta_h\varphi}_2^2 \preceq 1 +  \bnorm{e^{-\varphi}\Delta_h\varphi}_2,
$$
and we immediately deduce that
\begin{equation}\label{dphi}
\bnorm{e^{-\varphi}\Delta_h\varphi}_2 \preceq 1.
\end{equation}

The next step is to produce an $L^p$ estimate on $R_h(s;z,\cdot)$. 
For $\re s > \tfrac12$
we can estimate $R(s;z,w)$ using the heat kernel estimate (\ref{Ht.bound}) in the formula
\begin{equation}\label{res.heat}
R_h(s;z,w) = \int_0^\infty e^{s(1-s) t} H_h(t;z,w)\>dt.
\end{equation}
For convenience we set $s=2$ (although any $s>1$ would suffice for our argument).  
For $r := d_h(z,w) \ge 3$, we make the following estimate of (\ref{res.heat}) in terms of the constants 
$C_0, a, D$ appearing in (\ref{Ht.bound}):
\[
\begin{split}
R_h(2;z,w) & \le C_0 \int_0^{r/3}  t^{-1} e^{-(2+a)t} e^{-r^2/Dt}\>dt 
+ C_0 \int_{r/3}^\infty  t^{-1} e^{-(2+a)t} e^{-r^2/Dt}\>dt \\
& \le C_0 \int_{3 r}^\infty  e^{-u/D}\>du + C_0 \int_{r/3}^\infty e^{-(2+a)t} \>dt \\
& \le C_0e^{-3 r/D} + C_0 e^{-(2+a)r/3},
\end{split}
\]
where we substituted $u = r^2/t$ in the second line.   Assuming, as we may, that $D \le 9/2$,
this yields a uniform bound for $r \ge 3$,
$$
R_h(2;z,w) \le 2C_0 e^{-2r/3}. 
$$
For $r \le 3$, we can split up the integral (\ref{res.heat})
for $R_h(2;z,w)$ to obtain 
\[
\begin{split}
R_h(2;z,w) & \le C_0 \int_0^{r^2}  t^{-1} e^{-r^2/Dt}\>dt 
+ C_0 \int_{r^2}^9  t^{-1} \>dt +  C_0 \int_{9}^\infty e^{-at}\>dt\\
& \le C_1 - C_2 \log r,
\end{split}
\]
where $C_1$ and $C_2$ depend only on $C_0$ and $D$.
The point of keeping track of the constants in these calculations is to obtain 
estimates solely in terms of $r = d_h(z,w)$ and constants that depend on $b_0$ and $d_0$ but are 
otherwise independent of the uniformizing hyperbolic metric $h$. 

To control the $L_p$ norms uniformly in $z$, we lift $R_h(z,w)$ to $\bbH$ and let 
$\calF$ be a fundamental domain corresponding to $(X,h)$. Then to eliminate the 
$z$-dependence we enlarge the domain from $\calF$ to $\bbH^2$ 
and switch to geodesic polar coordinates centered at $z$:
\[
\begin{split}
\norm{R_h(2;z,\cdot)}_p^p & = \int_\calF \bigl|R_h(2;z,w)\bigr|^p\>dh \\
& \le \int_{\bbH^2} \bigl|R_h(2;z,w)\bigr|^p\>dh \\
& \le 2\pi \int_0^3 \Bigl[C_1 - C_2 \log r \Bigr]^p\>\sinh r\>dr \\
&\qquad + 2\pi \int_3^\infty (2C_0)^p e^{-2pr/3}\>\sinh r\>dr
\end{split}
\] 
The integrals are convergent for $p\ge 2$, so this establishes uniform estimates 
\begin{equation}\label{res.lp}
\norm{R_h(2;z,\cdot)}_p \preceq 1,\qquad\text{for }p\ge 2,
\end{equation}
where for each $p$ the constant depends only on $b_0$ and $d_0$.

We can now combine the estimates (\ref{dphi}) and (\ref{res.lp}) to control $\varphi$
pointwise, starting from
$$
\varphi(z) = \int_X R_h(2;z,w) (\Delta_h+2) \varphi(w)\>dh.
$$
This leads immediately to
\begin{equation}\label{supphi}
|\varphi(z)| \le \bnorm{R_h(2;z,\cdot)e^{\varphi}}_2 \> \bnorm{e^{-\varphi}(\Delta_h+2)\varphi}_2.
\end{equation}
To bound the first term in (\ref{supphi}), we use
\[
\begin{split}
\bnorm{R_h(2;z,\cdot)e^{\varphi}}_2 & \le \bnorm{R_h(2;z,\cdot)}_2 + \bnorm{R_h(2;z,\cdot)(e^{\varphi}-1)}_2 \\
& \le  \bnorm{R_h(2;z,\cdot)}_2 + \bnorm{R_h(2;z,\cdot)}_4\> \bnorm{e^{\varphi}-1}_4.
\end{split}
\]
By (\ref{res.lp}) and (\ref{trud}), the norms on the right are all bounded by constants that depend only
on $b_0$ and $d_0$.  For the second term in (\ref{supphi}), we have
\[
\begin{split}
\bnorm{e^{-\varphi}(\Delta_h+2)\varphi}_2 & \le \bnorm{e^{-\varphi}\Delta_h\varphi}_2 + 2\bnorm{e^{-\varphi}\varphi}_2 \\
& \le \bnorm{e^{-\varphi}\Delta_h\varphi}_2 + 2\bnorm{\varphi}_2+ 2\bnorm{(e^{-\varphi}-1)\varphi}_2 \\
& \le  \bnorm{e^{-\varphi}\Delta_h\varphi}_2 + 2\bnorm{\varphi}_2+ 2\bnorm{e^{-\varphi}-1}_4\> \norm{\varphi}_4.
\end{split}
\]
The first term is bounded by (\ref{dphi}), and $\norm{\varphi}_p$ is covered for $p \ge 2$
by Lemma~\ref{phi.bounds} together with (\ref{Ssr}).  It is also easy to bound $\norm{e^{-\varphi}-1}_4$
by means of (\ref{trud}) and Lemma~\ref{phi.bounds}, since
$$
(e^{-\varphi}-1)^4 = \exp_2(-4 \varphi) - 4 \exp_2(-3\varphi) + 6 \exp_2(-2\varphi) - 4 \exp_2(-\varphi).
$$
Hence, the terms on the right side of (\ref{supphi}) are bounded by constants
that depend only on $b_0$, $b_2$, and $d_0$, and the result is proved.
\end{proof}

\bigbreak
With control of the conformal factor $e^{2\varphi}$, we are able to control the 
lengths of geodesics in $(X,g)$:
\begin{corollary}\label{ell0.bound} 
Suppose $(X,g)$ is a conformally compact surface hyperbolic near infinity, and 
let $\ell_0(g)$ denote the length of the shortest closed geodesic.
Then we have
$$
\ell_0(g) \succeq 1,
$$
with a constant that depends only on  $b_0$, $b_2$, and $d_0$.
\end{corollary}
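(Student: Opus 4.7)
The plan is to exploit the conformal uniformization $g = e^{2\varphi}h$ from Proposition~\ref{MT.thm} together with the uniform pointwise bound $\sup_X|\varphi|\preceq 1$ from Lemma~\ref{C0.bound}. These combine to give a bi-Lipschitz equivalence between $g$ and $h$ with constant $e^C$ for some $C\preceq 1$; in particular $L_g(\gamma)\ge e^{-C}L_h(\gamma)$ for every rectifiable curve $\gamma$ in $X$.

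For a closed $g$-geodesic $\gamma$ representing a nontrivial free homotopy class in $X$, I would compare directly to the corresponding $h$-geodesic. Since $(X,h)$ is a conformally compact complete hyperbolic surface, its covering Fuchsian group is geometrically finite with no parabolic elements, so every nontrivial free homotopy class contains a unique closed $h$-geodesic $\gamma_h$ that minimizes $h$-length in its class. The bi-Lipschitz estimate then gives
$$
L_g(\gamma)\;\ge\;e^{-C}L_h(\gamma)\;\ge\;e^{-C}L_h(\gamma_h)\;\ge\;e^{-C}\ell_0(h),
$$
and the lower bound $\ell_0(h)=2\inj(X,h)\succeq 1$ from (\ref{inj.bound}) of Lemma~\ref{phi.bounds} closes this case.

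The main obstacle will be closed $g$-geodesics $\gamma$ that are null-homotopic in $X$, where the bi-Lipschitz comparison loses its bite because $\gamma$ also contracts in $h$. To deal with these, I would combine the conformal curvature identity $K(g)=e^{-2\varphi}(-1-\Delta_h\varphi)$ with the $L^\infty$ bound on $\varphi$ and the $L^2$ estimate $\bnorm{e^{-\varphi}\Delta_h\varphi}_2\preceq 1$ from (\ref{dphi}) to obtain $\norm{K(g)+1}_{L^2(K,dA_g)}\preceq 1$. Applying Gauss--Bonnet on the disk $D\subset X$ bounded by $\gamma$ yields $\int_{D\cap K}(K(g)+1)\,dA_g=2\pi+\Area_g(D)$, whereupon Cauchy--Schwarz forces $\Area_g(D\cap K)\succeq 1$, and hence $\Area_g(D)\succeq 1$. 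Transferring this area bound to $h$ via the bi-Lipschitz constant and applying the hyperbolic isoperimetric inequality to a lift of $\gamma$ in $\mathbb{H}^2$ then converts it into $L_h(\gamma)\succeq 1$, whence $L_g(\gamma)\succeq 1$. The subtle point in this last step is handling the possibility that $\gamma$ is not simple, so that its lift may fail to bound an embedded disk; this can be accommodated by replacing $D$ with an innermost subdisk bounded by an embedded subloop, which still meets $K$ nontrivially by the same Gauss--Bonnet argument.
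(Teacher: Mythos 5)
Your first step coincides exactly with the paper's entire proof: by Lemma~\ref{C0.bound} the conformal factor is uniformly bounded, so $g=e^{2\varphi}h$ and $h$ are uniformly bi-Lipschitz, $\ell(\eta;g)\succeq \ell(\eta;h)$, and for a homotopically nontrivial closed curve $\eta$ one has $\ell(\eta;h)\ge\ell_0(h)\succeq 1$ by (\ref{inj.bound}). The paper stops there, asserting $\ell(\eta;h)\ge\ell_0(h)$ for every closed $g$-geodesic without separating out the contractible ones, which is exactly where that inequality needs an argument (and contractible geodesics are not vacuous here, since the corollary is later fed into the Klingenberg-type bound in the proof of Proposition~\ref{Kbounds}, where $K(g)$ may be positive somewhere). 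Your second case is therefore a genuine addition, and it works: $K(g)+1$ is supported in $K$ with $L^2(dg)$ norm $\preceq 1$ by (\ref{dphi}), Lemma~\ref{phi.bounds} and Lemma~\ref{C0.bound} (mind the paper's convention $\Delta_h\ge 0$, so $K(g)=e^{-2\varphi}(\Delta_h\varphi-1)$ and $K(g)+1=e^{-2\varphi}\Delta_h\varphi+(1-e^{-2\varphi})$, both pieces being controlled); Gauss--Bonnet on a disk with geodesic boundary gives $\int_D(K(g)+1)\,dg=2\pi+\Area_g(D)$, Cauchy--Schwarz then forces $\Area_g(D)\succeq 1$, the bi-Lipschitz bound transfers this to $\Area_h(D)\succeq 1$, and lifting the embedded disk to $\bbH^2$ the isoperimetric inequality $L\ge A$ yields $\ell(\gamma;h)\succeq 1$. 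The only inaccuracy is your final patch for non-simple contractible geodesics: an innermost embedded subloop of a null-homotopic closed curve need \emph{not} bound a disk --- it can be essential (subloops of a commutator-type curve, for instance) --- so the correct statement is a dichotomy applied to the embedded subloop: either it is essential, in which case your free-homotopy argument applies to it directly, or it is null-homotopic and, being embedded, bounds a disk, in which case Gauss--Bonnet applies with the exterior angle at the corner contributing at most $\pi$, so $\int(K(g)+1)\,dg\ge\pi+\Area_g$ and the rest goes through; either way a subloop of $\gamma$ has $h$-length $\succeq1$, hence so does $\gamma$. With that adjustment your proof is complete and is in fact more careful than the paper's, at the cost of invoking Gauss--Bonnet, the Epstein fact that embedded null-homotopic circles bound disks, and the hyperbolic isoperimetric inequality, none of which the paper's one-line argument uses.
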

\begin{proof}
Suppose $\eta$ is a closed geodesic on $(X,g)$.  By Lemma~\ref{C0.bound},
we can estimate the $g$-length by
$$
\ell(\eta;g) \succeq \ell(\eta;h).
$$
Although $\eta$ will not be a $h$-geodesic in general, we still have the bound
$\ell(\eta;h) \ge  \ell_0(h)$.
Since $\ell_0(h)$ is bounded below in terms of $d_0$, this gives a lower bound
on $\ell(\eta; g)$ that depends only on $b_0, b_2$ and $d_0$.
\end{proof}

\bigbreak
\begin{proof}[Proof of Proposition~\ref{Kbounds}]
Since $K(g)$ is not integrable on $(X,g)$, for the sake of estimates it is convenient
to replace it by the compactly supported function
$$
\Psi := K(g) +1 = e^{-2\varphi} \Delta_h \varphi.
$$
To control $\norm{K(g)}_\infty$,
we seek to estimate $\norm{\Delta_h \Psi}_2$ and then remove the Laplacian using 
$R_h(2)$ as in the proof of Lemma~\ref{phi.bounds}. 

The third local heat invariant has the form
$$
\alpha_3(g) = c_1 |\nabla_g K(g)|^2 + c_2 K(g)^3,
$$
where $c_1 \ne 0$ according to \cite[Appendix]{OPS:1988}.
Thus the third relative invariant is
\begin{equation}\label{b3}
b_3 = c_1 \int_X |\nabla_g K(g)|^2 \>dg + c_2\int_X (K(g)^3e^{2\varphi} +1) \>dh
\end{equation}
By $g = e^{2\varphi} h$ we have
$$
\int_X |\nabla_g K(g)|^2 \>dg =  \int_X |\nabla_h K(g)|^2 \>dh = \norm{\nabla_h \Psi}_2^2.
$$
Noting that
$$
\int_X \Psi e^{2\varphi}\>dh = \int_X \Delta_h\varphi \>dh = 0,
$$
the second term in $b_3$ can be reduced to
$$
\int_X (K(g)^3e^{2\varphi} +1) \>dh = \int_X (\Psi^3 - 3\Psi^2)e^{2\varphi}\>dh - 4\pi b_0.
$$
Lemma~\ref{C0.bound} gives us control of $\sup |e^{2\varphi}|$, and
the combination of Lemma~\ref{C0.bound} and (\ref{dphi}) gives a bound on
$\norm{\Psi}_2$.  Thus from (\ref{b3}) we obtain
\begin{equation}\label{abpsi3}
\norm{\nabla_h \Psi}_2^2 \preceq 1 + \norm{\Psi}_3
\end{equation}
where the constants depend only on $b_0, b_2, b_3$, and $d_0$.
Using the Solobev inequalities (\ref{Ssr}) we estimate
$$
\norm{\Psi}_3^3 \preceq  \norm{\nabla_h \Psi}_2 \>\norm{\Psi}_2^2.
$$
In conjunction with (\ref{abpsi3}), this implies
\begin{equation}\label{nablapsi}
\norm{\nabla_h \Psi}_2 \preceq 1.
\end{equation}
Note also that by means of (\ref{Ssr}), we also have an $L^p$ bound
\begin{equation}\label{Lp.psi}
\norm{\Psi}_p \preceq 1,
\end{equation}
for any $p\ge 2$.

At this point the usual bootstrap approach applies; we sketch the details 
for the sake of completeness.  Assume that from $b_0, b_2, \dots, b_k, d_0$ we have extracted
the bound
\begin{equation}\label{induct}
\bnorm{\nabla_h^{j-2} \Psi}_2 \preceq 1, \qquad\text{for } j=2,\dots,k
\end{equation}
for $k \ge 3$.   (We start the induction at $k=3$ by (\ref{nablapsi}) and (\ref{Lp.psi}).)
Note that at this stage we also have
\begin{equation}\label{induct.phi}
\bnorm{\nabla_g^j \varphi}_2 \preceq 1, \qquad\text{for } j=0,\dots,k.
\end{equation}
(The $\varphi$ estimates stay two derivatives ahead of those for $\Psi$.)
According to \cite[Appendix]{OPS:1988}, the heat coefficient $b_{k+1}$ takes the form
\begin{equation}\label{bk.curv}
b_{k+1} = c_1 \int_X |\nabla_g^{k-1} K(g)|^2\>dg + c_2 \int_X K(g)\>|\nabla_g^{k-2} K(g)|^2\>dg 
+ \text{lower order},
\end{equation}
with $c_1 \ne 0$, where ``lower order'' means fewer derivatives of $K(g)$.  After replacing
$K(g)$ by $\Psi$, the lower order terms can
be estimated directly using the inductive hypothesis (\ref{induct}) and some combination of
(\ref{Lp.psi}) and (\ref{Ssr}).  We can replace $\nabla_g$ by $\nabla_h$ using (\ref{induct.phi})
to estimate the extra terms generated.  Thus from $b_{k+1}$ and the inductive hypothesis
we obtain
\begin{equation}\label{b4.bound}
\bnorm{\nabla_h^{k-1} \Psi}_2^2 \preceq 1 + \int_X |\Psi| \>|\nabla_h^{k-2} \Psi|^2\>dh.
\end{equation}
Applying the H\"older inequality to the second term gives,
$$
\int_X |\Psi| \> \nabla_h^{k-2} \Psi |^2\>dh 
\le \norm{\Psi}_2\> \bnorm{\nabla_h^{k-2} \Psi}_4^2 
$$
Again we turn to (\ref{Ssr}) for the bound
$$
\bnorm{\nabla_h^{k-2} \Psi}_4 \preceq \norm{\nabla_h^{k-1} \Psi}_2^{1/2}\> \norm{\nabla_h^{k-2} \Psi}_2^{1/2}.
$$
By the inductive hypothesis (\ref{induct}) we thus derive from (\ref{b4.bound})
the estimate
$$
\bnorm{\nabla_h^{k-1} \Psi}_2^2 \preceq 1 +\bnorm{\nabla_h^{k-1} \Psi}_2,
$$
which immediately yields
\begin{equation}\label{DPsi.bound}
\bnorm{\nabla_h^{k-1} \Psi}_2 \preceq 1,
\end{equation}
completing the induction.

From the full collection of heat invariants $b_0, b_1, \dots$ we thereby obtain a full
set of $H^k$ estimates:
$$
\bnorm{\nabla_h^{k} \Psi}_2 \preceq 1, \qquad \bnorm{\nabla_h^{k} \varphi}_2 \preceq 1.
$$
To extract $C^k$ estimates is now a simple matter.  Let $P_m$ be an
arbitrary differential operator of order $m$ with coefficients supported in $K \subset X$.
From 
$$
R_h(2)(\Delta_h+2) P_m\Psi = P_m\Psi,
$$ 
we obtain
$$
|P_m\Psi(z)| \le \norm{R(s;z,\cdot)}_2\> \Bigl(\norm{\Delta_h P_m\Psi}_2 + 2 \norm{P_m\Psi}_2 \Bigr)
\preceq 1.
$$

To complete the proof, we must produce a lower bound on the injectivity radius $\inj(X,g)$.
If $K(g) \le 0$, then $\inj(X,g) = \ell_0(g)/2$ and Corollary~\ref{ell0.bound} already 
supplies the estimate.  Otherwise, we have $\kappa := \sup K(g) >0$ and
the $C^0$ bound derived above gives $\kappa \preceq 1$.  In this case the 
result follows from the standard estimate (see e.g.~\cite[\S6.3.2]{Petersen}),
$$
\inj(X,g) \ge \min \left(\frac{\pi}{\sqrt{\kappa}},\> \frac{\ell_0(g)}2\right).
$$
\end{proof}

\end{document}